\def\dsp{\displaystyle}
\numberwithin{equation}{section}
\newtheorem{theorem}{Theorem}[section]
\newtheorem{lemma}[theorem]{Lemma}
\newtheorem{remark}{Remark}[section]
\title{Numerical approximations for a hyperbolic integrodifferential equation with a non-positive variable-sign kernel and nonlinear-nonlocal damping}
\author{ 
Wenlin Qiu\thanks{School of Mathematics, Shandong University, Jinan 250100, China. (Email: wlqiu@sdu.edu.cn)}
\and Xiangcheng Zheng\thanks{School of Mathematics, Shandong University, Jinan 250100, China. (Email: xzheng@sdu.edu.cn)}
\and Kassem Mustapha\thanks{Corresponding author. School of Mathematics and Statistics, University of New South Wales, Sydney, Australia. (Email: 
k.mustapha@unsw.edu.au)}}
\begin{document}
\maketitle
\begin{abstract}
This work considers the Galerkin approximation and analysis for a hyperbolic integrodifferential equation, where the non-positive variable-sign kernel and nonlinear-nonlocal damping with both the weak and viscous damping effects are involved. We derive the long-time stability of the solution and its finite-time uniqueness. For the semi-discrete-in-space Galerkin scheme, we derive the long-time stability of the semi-discrete numerical solution and its finite-time error estimate by technical splitting of intricate terms. Then we further apply the centering difference method and the interpolating quadrature to construct a fully discrete Galerkin scheme and prove the long-time stability of the numerical solution and its finite-time error estimate by designing a new semi-norm. Numerical experiments are performed to verify the theoretical findings. 
\end{abstract}


\section{Introduction}
This work considers the numerical approximation and analysis of a hyperbolic integrodifferential equation \cite{Cannarsa1,Cannarsa} which models viscoelastic systems with memory in various fields, see e.g. \cite{Alabau-Boussouira,Appleby,Cannarsa1,Georgiev,Lebon,Renardy}
\begin{equation}\label{eq1.1}
   \begin{split}
       u''(x, t) + q(t)u'(x, t)  -\Delta u(x,t) + \int_0^{t}\beta(t-s)\Delta u(x,s)ds = f(x, t),
   \end{split}
   \end{equation}
for  $ (x,t)\in\Omega \times (0,T]$, subject to homogeneous Dirichlet boundary condition $u =0$ on  $\partial\Omega \times (0,T]$. Here,  $u'=\frac{\partial u}{\partial t}$, $u''=\frac{\partial^2 u}{\partial t^2}$, $T$ may be finite or infinite that will be specified in different cases, $\Omega$ is a bounded domain in $\mathbb{R}^d$ ($d=1,2,3$) with smooth boundary $\partial\Omega$, $\Delta$ is the Laplace operator,  and $f$ is the forcing function.  This model involves the nonlinear-nonlocal damping term with weak and viscous damping coefficients $\mu_1$ and $\mu_2$, respectively (cf. \cite[Equation 6.2]{Cannarsa} and \cite{Emm})
\begin{equation}\label{eq1.2}
     \begin{split}
          q(t) = G\left(   \int_{\Omega}\mu_1| u(x,t)|^2+\mu_2|\nabla u(x,t)|^2dx  \right), \quad \mu_1, \mu_2 \geq 0, \quad \mu_1^2+ \mu_2^2 > 0,
     \end{split}
     \end{equation}
and initial  conditions
     \begin{equation}\label{eq1.3}
          u(x,0)=u_0(x),  \quad  u'(x,0)=u_1(x),  \quad  x\in \Omega,
     \end{equation}
where $\nabla$ is the gradient operator,  the function $G:\mathbb{R}^+\rightarrow \mathbb{R}^+$, and $u_0$ and $u_1$ are given initial values. Moreover, the variable-sign convolution kernel $\beta\in L^1(0,\infty)$ is selected as \cite{Cannarsa1,Cannarsa}:
     \begin{equation}\label{eq1.5}
     \begin{split}
        &  \beta(t) = \frac{e^{-\sigma t}t^{\alpha-1}\cos(\gamma t)}{\Gamma(\alpha)},   \quad   \text{satisfying the following standard conditions} \\
        & \text{(i)} \; \alpha = 1, \quad \sigma> 1, \quad 0\leq \gamma \leq \sigma, \\
        & \text{(ii)} \; \alpha = 1/2, \quad \sigma> 1, \quad 0\leq \gamma \leq \sqrt{3}\sigma.
     \end{split}
     \end{equation}

See \cite{Cannarsa1,Cannarsa,Messaoudi1,Messaoudi2} for the well-posedness  of certain nonlinear hyperbolic integrodifferential equations. For the numerical aspect, Yanik and Fairweather \cite{Yanik} developed a Galerkin finite element method for a variant of (\ref{eq1.1}) where the nonlinearity appears in the convolution term. Cannon et al. \cite{Cannon} established the stability and $L^2$-error estimate of the Galerkin approximation for the linear version of (\ref{eq1.1}) with smooth kernel. Allegretto and Lin \cite{Allegretto} extended the results to the linear case with the positive-type or monotonic kernel. Fairweather \cite{Fairweather} formulated the orthogonal spline collocation methods and derived error estimates. Larsson and Saedpanah developed the continuous Galerkin method for the linear case of (\ref{eq1.1}) with weakly singular kernels \cite{Larsson,Saedpanah}.  Karaa et al. \cite{Karaa} proved a priori hp-estimate under the case of smooth kernels. Xu \cite{Xu2} deduced the long-time decay properties of the numerical solutions with tempered weakly singular kernels. Then, Xu \cite{Xu4} proved the long-time $L^{\infty}$ error estimate for a homogeneous viscoelastic rod. Recently, Baker et al. considered the convolution quadrature method for the wave equation with linear or nonlinear time-fractional damping terms \cite{Baker1,Baker2}.

Despite the significant progress on the linear case of (\ref{eq1.1}), there are rare studies considering the hyperbolic integrodifferential equation with variable-sign kernel and nonlinear-nonlocal damping coefficient. Due to the variability of the sign and the non-positivity of the kernel, many commonly-used quadrature rules for convolution with a completely monotonic kernel such as the rectangle rule are not applicable. Furthermore, the nonlinear-nonlocal damping makes the numerical analysis more intricate, especially in the case of long-time estimates that have significant applications such as the long-time performance tests of viscoelastic materials. 

This work accommodates these issues to propose and analyze a formally second-order accurate numerical approximation for problem \eqref{eq1.1}-\eqref{eq1.5}. Specifically, the main contributions are enumerated as follows:
\begin{itemize}
\item  We introduce a transformed kernel to convert the original model with a non-positive kernel into another hyperbolic integrodifferential equation with a positive type kernel, and overcome the difficulties caused by the variable-sign kernel and the nonlinear-nonlocal damping to prove the long-time stability and finite-time uniqueness of the solutions of problem \eqref{eq1.1}-\eqref{eq1.5}, the methods of which could assist the subsequent analysis of numerical solutions. 

\item We construct a spatial semi-discrete Galerkin scheme for solving \eqref{eq1.1}-\eqref{eq1.5} and derive the long-time stability of numerical solutions and the finite-time uniqueness and error estimate. A key ingredient of the proof lies in estimating the difference between nonlinear-nonlocal terms in (\ref{eq5.2}). Instead of splitting this difference into several pieces by introducing intermediate terms, which is difficult due to the nonlinearity and nonlocality and may deteriorate the accuracy, we introduce a quantity $\varphi_\vartheta$ to rewrite this difference into a more integrating form. By this means, a novel splitting of the difference (cf. (\ref{eq5.3})--(\ref{eq5.4})) appears to facilitate the analysis, cf. the estimates among (\ref{eq5.5})--(\ref{eq5.14}).

\item We employ the centering difference method and the interpolating quadrature to construct a fully discrete Galerkin scheme. We deduce the long-time stability of numerical solutions and the finite-time uniqueness and error estimate. Apart from the techniques in studying the semi-discrete scheme, we define a new semi-norm by (\ref{eq6.3}) to clarify the structure of the numerical scheme and the error equation to support the error estimate.
\end{itemize}

The rest of the work is organized as follows:
Section \ref{sec2} addresses some well-posedness issues of the model. Section \ref{sec3} gives long-time stability estimates and finite-time error estimates for the spatial semi-discrete scheme. The fully-discrete scheme is constructed and analyzed in Section \ref{sec4}. Section \ref{sec5} provides numerical experiments to verify the theoretical results, and we address concluding remarks in the last section.

\section{Well-posedness issue} \label{sec2}
Let $L^2(\Omega)$, $H^j(\Omega)$ ($j \ge 1$) and $H^1_0(\Omega)$  denote usual Sobolev spaces on $\Omega$. The notation $(\cdot, \cdot)$ indicates the $L^2(\Omega)$-inner product on $\Omega$ and  $\|\cdot\|$ is the associated norm, and $\|\cdot\|_j$ denotes the norm in $H^j(\Omega)$ for $j\ge 1.$ For $T>0$, the $L^{1}(0,T; L^2(\Omega))$ indicates the space of measurable
functions $g: [0,T] \rightarrow L^2(\Omega)$ satisfying $\int_{0}^{T}\|g(t)\|dt< \infty$. For the normed linear space $\mathcal{X}$ endowed with the norm $\|\cdot\|_{\mathcal{X}}$, the $ {\mathcal C}^{j}([0, T]; \mathcal{X})$ ($j=0,1$) represents the space of continuous or continuously differentiable functions from $[0, T]$ to $\mathcal{X}$, respectively. In particular,  we set $ {\mathcal C}([0,T]; \mathcal{X})= {\mathcal C}^{0}([0,T]; \mathcal{X})$. Throughout the work, we use $C$ to denote a generic positive constant independent of the mesh sizes and may differ at different occurrences.

To prove the main results, the following lemma for the variable-sign kernels in \eqref{eq1.5} is critical (the proof could be found in \cite{Cannarsa1} for the case (i) in \eqref{eq1.5} and in \cite{Zhao} for the case (ii) in \eqref{eq1.5}).
\begin{lemma}\label{lemma1.1} The kernel $K(t)=\int_t^{\infty}\beta(s)ds$ is of positive type with $K(\infty)=0$ and $K(0):=K_0<1$.
 \end{lemma}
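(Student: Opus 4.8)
The plan is to use the classical frequency-domain characterization: a function $K\in L^1(0,\infty)$ is of positive type if and only if $\operatorname{Re}\widehat K(\omega)\ge0$ for every $\omega\in\mathbb R$, where $\widehat K(\omega)=\int_0^\infty K(t)e^{-i\omega t}\,\ud t$. Since $\beta\in L^1(0,\infty)$, the primitive $K(t)=\int_t^\infty\beta(s)\,\ud s$ is continuous and decays like $\beta$, so $K(\infty)=0$ is immediate and $K\in L^1(0,\infty)$; moreover $K'(t)=-\beta(t)$ and $K(0)=\int_0^\infty\beta(s)\,\ud s=:K_0=\widehat\beta(0)$. Integrating by parts gives $\widehat K(\omega)=\bigl(K_0-\widehat\beta(\omega)\bigr)/(i\omega)$, hence (as $K_0\in\mathbb R$)
\begin{equation*}
\operatorname{Re}\widehat K(\omega)=-\frac{\operatorname{Im}\widehat\beta(\omega)}{\omega},\qquad\omega\neq0,
\end{equation*}
with the value at $\omega=0$ supplied by continuity. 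Using $\widehat\beta(-\omega)=\overline{\widehat\beta(\omega)}$, it suffices to show $\operatorname{Im}\widehat\beta(\omega)\le0$ for $\omega>0$ in each of the two cases of \eqref{eq1.5}.

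For case (i), $\beta(t)=e^{-\sigma t}\cos\gamma t$ yields $\widehat\beta(\omega)=(\sigma+i\omega)/\bigl((\sigma+i\omega)^2+\gamma^2\bigr)$ and $K_0=\sigma/(\sigma^2+\gamma^2)$, and a direct computation gives
\begin{equation*}
\operatorname{Re}\widehat K(\omega)=\frac{\sigma^2-\gamma^2+\omega^2}{(\sigma^2+\gamma^2-\omega^2)^2+4\sigma^2\omega^2},
\end{equation*}
which is nonnegative for all $\omega$ exactly when $\gamma\le\sigma$, the stated hypothesis; and $K_0<1$ because $\sigma^2+\gamma^2\ge\sigma^2>\sigma$ for $\sigma>1$. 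For case (ii), $\beta(t)=e^{-\sigma t}\cos\gamma t/\sqrt{\pi t}$, and $\int_0^\infty t^{-1/2}e^{-zt}\,\ud t=\sqrt{\pi/z}$ (for $\operatorname{Re}z>0$) gives, with principal branches,
\begin{equation*}
\widehat\beta(\omega)=\tfrac12\Bigl[\bigl(\sigma+i(\omega-\gamma)\bigr)^{-1/2}+\bigl(\sigma+i(\omega+\gamma)\bigr)^{-1/2}\Bigr],\qquad K_0=\operatorname{Re}(\sigma-i\gamma)^{-1/2}.
\end{equation*}
Writing $\sigma+ia=\rho(a)e^{i\phi(a)}$ with $\phi(a)=\arctan(a/\sigma)\in(-\tfrac\pi2,\tfrac\pi2)$ and $\rho(a)=\sigma/\cos\phi(a)$, one finds $K_0=(\sigma^2+\gamma^2)^{-1/4}\cos\bigl(\tfrac12\arctan(\gamma/\sigma)\bigr)<1$ (again since $(\sigma^2+\gamma^2)^{1/4}\ge\sqrt\sigma>1$), and for $\omega>0$
\begin{equation*}
\operatorname{Re}\widehat K(\omega)=\frac{1}{2\sqrt\sigma\,\omega}\Bigl[h\bigl(\phi(\omega-\gamma)\bigr)+h\bigl(\phi(\omega+\gamma)\bigr)\Bigr],\qquad h(\phi):=\sqrt{\cos\phi}\,\sin(\phi/2).
\end{equation*}

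The crux of the whole lemma is the nonnegativity of this last bracket, and this is precisely where $\gamma\le\sqrt3\,\sigma$ is needed. The function $h$ is odd on $(-\tfrac\pi2,\tfrac\pi2)$ and, since $h'(\phi)=0\iff\cos\phi=\tfrac12$, it increases on $[0,\tfrac\pi3]$, peaks at $\phi=\tfrac\pi3=\arctan\sqrt3$, and decreases on $[\tfrac\pi3,\tfrac\pi2)$; this is the source of the constant $\sqrt3$. If $\omega\ge\gamma$, both $\phi(\omega\pm\gamma)\ge0$ and the bracket is trivially nonnegative. If $0<\omega<\gamma\le\sqrt3\,\sigma$, then $\phi(\omega-\gamma)<0$ but $|\phi(\omega-\gamma)|=\arctan\bigl((\gamma-\omega)/\sigma\bigr)<\arctan\sqrt3=\tfrac\pi3$ lies on the increasing branch, so one needs $h\bigl(\phi(\omega+\gamma)\bigr)\ge h\bigl(|\phi(\omega-\gamma)|\bigr)$; when $\phi(\omega+\gamma)\le\tfrac\pi3$ this is monotonicity (since $\phi(\omega+\gamma)>|\phi(\omega-\gamma)|$), and when $\phi(\omega+\gamma)>\tfrac\pi3$, solving $h(\psi)=h(|\phi(\omega-\gamma)|)$ on the decreasing branch gives $\cos\psi=1-\cos|\phi(\omega-\gamma)|$, so the inequality collapses to the concrete estimate
\begin{equation*}
\frac{\sigma}{\sqrt{\sigma^2+(\omega+\gamma)^2}}+\frac{\sigma}{\sqrt{\sigma^2+(\omega-\gamma)^2}}\ \ge\ 1,
\end{equation*}
which is an elementary one-variable fact valid for all $\omega$ once $\gamma\le\sqrt3\,\sigma$ (it is sharp as $\omega\to0$ with $\gamma=\sqrt3\,\sigma$, since $2\sigma/\sqrt{\sigma^2+\gamma^2}\ge1\iff\gamma\le\sqrt3\,\sigma$). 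An equivalent route that makes the threshold even more transparent rewrites $\widetilde K(i\omega)=\tfrac14\int_0^1\bigl[(\sigma+i(s\omega-\gamma))^{-3/2}+(\sigma+i(s\omega+\gamma))^{-3/2}\bigr]\,\ud s$ and uses that $\operatorname{Re}(\sigma+ia)^{-3/2}=\rho(a)^{-3/2}\cos\bigl(\tfrac32\arctan(a/\sigma)\bigr)>0$ exactly for $|a|<\sqrt3\,\sigma$. Either way, once $\operatorname{Re}\widehat K(\omega)\ge0$ is established for all $\omega$ in both cases, $K$ is of positive type; together with $K(\infty)=0$ and $K_0<1$ shown above, the lemma follows. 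The main obstacle is exactly this case-(ii) positivity estimate: the cancellation in the two half-integer-power terms is delicate, and the bound $\gamma\le\sqrt3\,\sigma$ is exactly what keeps the relevant angle below the critical value $\pi/3$.
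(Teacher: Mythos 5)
The paper does not actually prove this lemma: it delegates case (i) of \eqref{eq1.5} to \cite{Cannarsa1} and case (ii) to \cite{Zhao}. Your proposal is therefore a self-contained verification rather than a reproduction of anything in the text, and its skeleton is sound. The frequency-domain criterion $\operatorname{Re}\widehat K(\omega)\ge 0$ is the standard characterization of positive-type kernels in $L^1(0,\infty)$ (both kernels here decay exponentially, so it applies), and the integration by parts giving $\operatorname{Re}\widehat K(\omega)=-\operatorname{Im}\widehat\beta(\omega)/\omega$ is correct. I have checked the explicit transforms: in case (i) the formula $\operatorname{Re}\widehat K(\omega)=(\sigma^2+\omega^2-\gamma^2)/[(\sigma^2+\gamma^2-\omega^2)^2+4\sigma^2\omega^2]$ is right and is nonnegative for all $\omega$ exactly when $\gamma\le\sigma$; in case (ii) the reduction to $h(\phi_-)+h(\phi_+)\ge0$ with $h(\phi)=\sqrt{\cos\phi}\,\sin(\phi/2)$, the location of the critical angle $\pi/3=\arctan\sqrt3$ from $h'(\phi)=\cos(3\phi/2)/(2\sqrt{\cos\phi})$, and the equivalence $h(\phi_+)\ge h(|\phi_-|)\iff\cos\phi_++\cos|\phi_-|\ge1$ (via $h^2=\tfrac12\cos\phi\,(1-\cos\phi)$) are all correct, as are both bounds $K_0<1$ for $\sigma>1$.

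Two caveats. First, the whole case-(ii) positivity ultimately rests on the inequality $\sigma(\sigma^2+(\omega+\gamma)^2)^{-1/2}+\sigma(\sigma^2+(\omega-\gamma)^2)^{-1/2}\ge1$ for $0<\omega<\gamma\le\sqrt3\,\sigma$, which you assert as ``an elementary one-variable fact'' without proof. It is true, and sharp as $\omega\to0$ with $\gamma=\sqrt3\,\sigma$ exactly as you say, but it is not a one-line monotonicity statement: the function $x\mapsto(\sigma^2+x^2)^{-1/2}$ changes convexity at $x=\sigma/\sqrt2$, so the minimum over $\omega$ has to be located with some care. A complete write-up must supply this step; as it stands, this is the one genuine gap. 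Second, the parenthetical ``equivalent route'' via $\operatorname{Re}(\sigma+ia)^{-3/2}$ does not close the argument on its own: for large $\omega$ the arguments $s\omega\pm\gamma$ leave $(-\sqrt3\,\sigma,\sqrt3\,\sigma)$ and the integrand changes sign, so pointwise positivity fails there and one must fall back on your main $h$-argument (which handles $\omega\ge\gamma$ trivially since both angles are then nonnegative). Neither issue is a wrong turn, but the first needs to be filled in before the lemma can be considered proved.
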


 We impose  the following assumptions on the function  $G$ in \eqref{eq1.2}: for $z\ge 0,$

($\mathbf{A1}$)   $G(z)\geq g_0$  for some positive constant $g_0$, \par
($\mathbf{A2}$) $G$ is continuously differentiable with $0\leq G'(z)\leq \mathcal{L}$, where $\mathcal{L}$ is the Lipschitz constant. 

Examples of $G$  include $G(z)=1+z$ or $G(z)=\sqrt{1+z}$.  Note that the assumption $(\mathbf{A2})$ implies that for any $0\leq z\leq k$ with $k>0$, $G(z)=G(0)+G'(\xi)z\leq G(0)+\mathcal L k=:g_1(k)$. In subsequent analysis, we write $g_1(k)$ as $g_1$ for simplicity if $k$ depends on the solutions or given data.

For each $t \in [0,T],$ the weak solution $u$ 
of \eqref{eq1.1} is defined as:  
\begin{multline}\label{eq2.3}
          (u''(t),v)  + G\left(   \mu_1\| u(t)\|^2+\mu_2\|\nabla u(t)\|^2 \right) (u'(t),v) +  (\nabla u(t), \nabla v)  \\
         - \int_0^{t}\beta(t-s)(\nabla u(s),  \nabla v)ds = (f(t), v),
   \end{multline}
 for any $v \in H^1_0(\Omega)$ and any $t\in (0,T]$.
We then apply the energy method, see e.g. \cite[Corollary 4.7]{Cannarsa}, to perform mathematical analysis for problem \eqref{eq1.1}-\eqref{eq1.5}.

\subsection{Long-time stability} 

\begin{theorem}\label{theorem2.1}
    Under assumption $(\mathbf{A2})$, for $u_0\in H^1(\Omega)$, $u_1\in L^2(\Omega)$ and $f\in L^{1}(0,T;L^2(\Omega))$, the weak solution $u\in {\mathcal C}^1([0,T]; L^{2}(\Omega))\cap {\mathcal C}([0,T]; H^1_0(\Omega))$ is unique and  satisfies the  regularity estimate: 
\begin{equation}\label{eq2.5}
           \|u'(t)\|  + \|\nabla u(t)\| \leq C\left( \|u_1\|+ \|\nabla u_0\|  + 
        \int_0^{t} \|f(s)\|ds \right), \quad {\rm for}~~~ 0<t\leq T. 
      \end{equation}
      Furthermore, if $u_0\in H^2_0(\Omega)$, $u_1\in H^1(\Omega)$ and $f'\in L^{1}(0,T;L^2(\Omega))$, then $u(t) \in H^2(\Omega)$ for each $t\in (0,T],$ and  
\begin{equation}\label{eq2.5n}
    \|\nabla u'(t)\| \leq C\left( \|\nabla u_1\| + \|\Delta u_0\| + \|f(0)\| + \int_{0}^{t}(\|f(s)\|+\|f'(s)\|)ds  \right),\quad {\rm for}~~~ 0<t\leq T.
\end{equation}
\end{theorem}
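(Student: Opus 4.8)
The plan is to prove \eqref{eq2.5} by a standard energy argument adapted to the variable-sign kernel, and then \eqref{eq2.5n} by differentiating the equation in time and repeating the argument in a stronger norm. The crucial structural device is Lemma~\ref{lemma1.1}: integrating by parts in the convolution term,
\[
\int_0^t \beta(t-s)(\nabla u(s),\nabla v)\,ds
= \frac{\ud}{\ud t}\int_0^t K(t-s)(\nabla u(s),\nabla v)\,ds - K_0(\nabla u(t),\nabla v),
\]
after which the memory term is governed by the positive-type kernel $K$ rather than the sign-changing $\beta$. Testing \eqref{eq2.3} with $v=u'(t)$ produces
\[
\frac12\frac{\ud}{\ud t}\Big(\|u'(t)\|^2 + (1-K_0)\|\nabla u(t)\|^2\Big)
+ q(t)\|u'(t)\|^2 - \Big(\text{memory terms in }K\Big) = (f(t),u'(t)),
\]
where $q(t)=G(\cdots)\ge 0$ only needs assumption $(\mathbf{A1})$-type positivity, which follows from $(\mathbf{A2})$ since $G(z)\ge G(0)>0$ is not guaranteed — but nonnegativity of $q$ is all that is needed here, and that comes from $G'\ge 0$ together with $G$ mapping into $\mathbb{R}^+$. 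The positive-type property of $K$ means that after integrating in time the memory contribution $\int_0^t (\frac{\ud}{\ud s}\int_0^s K(s-\tau)(\nabla u(\tau),\nabla\cdot)\,d\tau)(u'(s))\,ds$ can be handled: one more integration by parts moves the time derivative onto $u'$, yielding a term $\int_0^t\int_0^s K(s-\tau)(\nabla u(\tau),\nabla u(s))\,d\tau\,ds \ge 0$ plus boundary terms controlled by $\|\nabla u\|$ and the data. Since $1-K_0>0$, the left side controls $\|u'(t)\|^2 + \|\nabla u(t)\|^2$, and Grönwall's inequality (after absorbing $\int_0^t\|f\|\,\|u'\|$ via $\|u'(s)\|\le \sup_{[0,t]}\|u'\|$) gives \eqref{eq2.5}; uniqueness follows by applying the same estimate to the difference of two solutions with zero data.

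For \eqref{eq2.5n} I would differentiate \eqref{eq2.3} formally in $t$. Writing $w=u'$, this produces an equation of the same hyperbolic-integrodifferential type for $w$, but with extra terms: the derivative of the damping coefficient, $q'(t)(u'(t),v)$, and the convolution term picks up the boundary contribution $\beta(0)(\nabla u(t),\nabla v) = \frac{1}{\Gamma(\alpha)}(\nabla u(t),\nabla v)$ (in case (i), $\beta(0)=1$; in case (ii) $\beta$ is weakly singular at $0$, so one instead differentiates the convolution as $\frac{\ud}{\ud t}\int_0^t\beta(t-s)\nabla u(s)\,ds = \int_0^t\beta(t-s)\nabla u'(s)\,ds + \beta(t)\nabla u_0$, avoiding $\beta(0)$). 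Testing the differentiated equation with $v=u''(t)=w'(t)$ and using the same integration-by-parts / positive-type argument as before, the new energy controls $\|u''(t)\|^2 + \|\nabla u'(t)\|^2$. The term $q'(t)(u'(t),u''(t))$ is estimated using $q'(t)=G'(\cdots)\cdot\frac{\ud}{\ud t}(\mu_1\|u\|^2+\mu_2\|\nabla u\|^2)$, where $G'\le\mathcal L$ by $(\mathbf{A2})$ and $\frac{\ud}{\ud t}(\mu_1\|u\|^2+\mu_2\|\nabla u\|^2)$ is bounded in terms of $\|u'\|,\|\nabla u'\|,\|u\|,\|\nabla u\|$, the first-order quantities already controlled by \eqref{eq2.5}; this yields $|q'(t)|\le C$ and hence a term absorbable by Grönwall. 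One also needs the consistency value $\|u''(0)\| \le C(\|\nabla u_1\|+\|\Delta u_0\|+\|f(0)\|)$, read off from \eqref{eq1.1} at $t=0$ (the memory integral vanishes at $t=0$), together with $\|\nabla u'(0)\|=\|\nabla u_1\|$ as the initial data for the energy. Finally, $u(t)\in H^2(\Omega)$ follows from elliptic regularity applied to $-\Delta u(t) = f(t) - u''(t) - q(t)u'(t) + \int_0^t\beta(t-s)\Delta u(s)\,ds$, bootstrapping on the already-established bounds; one may need a Grönwall argument on $\|\Delta u(t)\|$ because $\Delta u$ reappears under the integral.

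I expect the main obstacle to be the rigorous treatment of the memory term for the weakly singular kernel in case (ii) of \eqref{eq1.5} ($\alpha=1/2$), where $\beta(t)\sim t^{-1/2}$ near $0$: the formal integration by parts $\int_0^t\beta(t-s)g(s)\,ds = \frac{\ud}{\ud t}\int_0^t K(t-s)g(s)\,ds - K_0 g(t)$ must be justified carefully since $\beta\notin L^\infty$ but $K=\int_t^\infty\beta$ is continuous and bounded with $K(0)=K_0<1$, so this identity is still valid; the positive-type property of $K$ (Lemma~\ref{lemma1.1}) is what makes the sign work out regardless. The second delicate point is the passage to the differentiated equation for \eqref{eq2.5n}: since the theorem only assumes $u_1\in H^1$, $u_0\in H^2_0$, the differentiation must be justified (e.g. by a Galerkin/density argument, or by a difference-quotient argument in time) rather than assumed, and the boundary term from differentiating the convolution — $\beta(t)\nabla u_0$ in case (ii), which is only in $L^1_{\mathrm{loc}}$ in $t$ — must be shown to be integrable against $\|u''\|$, which it is since $\int_0^t\beta(s)\,ds$ is bounded. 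Everything else is bookkeeping with Cauchy--Schwarz, Young's inequality, and Grönwall.
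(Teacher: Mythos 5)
Your proof of \eqref{eq2.5} follows essentially the paper's route: test with $v=u'(t)$, convert the sign-changing $\beta$ into the positive-type kernel $K$ via $K'=-\beta$ and integration by parts, discard the nonnegative double-convolution term by Lemma~\ref{lemma1.1}, control the boundary terms with $K_0<1$, and close with the supremum-in-time trick plus Gr\"onwall; your remark that only $q\ge 0$ (rather than $(\mathbf{A1})$) is needed for this estimate is consistent with the theorem's hypotheses. For \eqref{eq2.5n} you take a genuinely different route: you differentiate the equation in $t$ and test with $u''$, whereas the paper keeps the original weak form and tests with $v=-\Delta u'(t)$, so that the damping contributes $q(t)\|\nabla u'\|^2\ge 0$ and no $q'$ ever appears; the only extra work in the paper is an integration by parts in time on $\int_0^t(f,\Delta u')\,ds$, which is exactly where the hypothesis $f'\in L^1(0,T;L^2)$ enters. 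Your route is viable but costs you the term $q'(t)(u',u'')$, and here your argument has a slip: you list $\|\nabla u'\|$ among the ``first-order quantities already controlled by \eqref{eq2.5}'' and conclude $|q'(t)|\le C$, but \eqref{eq2.5} controls only $\|u'\|$ and $\|\nabla u\|$, so $q'$ involves precisely the unknown $\|\nabla u'(t)\|$ you are trying to bound. The estimate is rescuable --- $|q'(t)(u',u'')|\le C\|u'\|\,\|\nabla u'\|\,\|u''\|\le C(\|\nabla u'\|^2+\|u''\|^2)$ is absorbable by Gr\"onwall --- but as written the step is circular.

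The one genuine gap is uniqueness. You assert it ``follows by applying the same estimate to the difference of two solutions with zero data,'' but the damping coefficient $q(t)=G(\mu_1\|u\|^2+\mu_2\|\nabla u\|^2)$ is nonlinear and nonlocal in $u$, so the difference $Y=\hat u-\tilde u$ does \emph{not} satisfy the homogeneous equation: it satisfies \eqref{eq4.14}, with the residual $-[G_0(\hat u,\nabla\hat u)-G_0(\tilde u,\nabla\tilde u)](\tilde u',v)$ on the right-hand side. The paper's uniqueness argument must therefore invoke the Lipschitz bound $(\mathbf{A2})$ on $G$ together with the a priori estimate \eqref{eq2.5} applied to $\tilde u'$ and the Poincar\'e inequality to dominate this residual by $C\|\nabla Y(t)\|\,\|Y'(t)\|$ before Gr\"onwall can be applied. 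Your proposal omits this entirely, and without it the uniqueness claim does not follow from the stability estimate alone.
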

\begin{proof}
In \eqref{eq2.3}, we choose $v=u'(t)$ to get
\begin{multline*}
          \frac{1}{2}\frac{d}{dt}\|u'(t)\|^2  + G\left(   \mu_1\| u(t)\|^2+\mu_2\|\nabla u(t)\|^2 \right) \|u'(t)\|^2 + \frac{1}{2}\frac{d}{dt}\|\nabla u(t)\|^2   \\
          - \int_0^{t}\beta(t-s)(\nabla u(s),  \nabla u'(t))ds = (f(t), u'(t)).
     \end{multline*}
Hence, multiply through by $2$, integrate over the time interval $(0,t)$ and use the assumption ($\mathbf{A1}$), $K'(t)=-\beta(t)$ and the Cauchy-Schwarz inequality to get
\begin{multline}
    \label{eq4.4}
          \|u'(t)\|^2  + 2g_0 \int_0^{t}\|u'(s)\|^2ds +\|\nabla u(t)\|^2  
        \\ + 2\int_0^{t}\int_0^{s}K'(s-z)(\nabla u(z),  \nabla u'(s))dzds 
         \le  \|u_1\|^2 + \|\nabla u_0\|^2  +   2\int_0^{t} \|f(s)\|  \| u'(s)\|ds.
\end{multline}
Integration by parts yields 
   \begin{multline*}
       \Phi(t;u) := \int_{0}^{t}\int_0^sK'(s-z)(\nabla u(z), \nabla u'(s))dzds \\
        = \left( \nabla u_{0}, \int_{0}^{t}K(s)\nabla u'(s)ds \right) - \frac{K_0}{2}\left[\|\nabla u(t)\|^2 - \|\nabla u_{0}\|^2\right]\\
        + \int_0^{t}\int_0^{s}K(s-z)(\nabla u'(q),  \nabla u'(s))dzds.
  \end{multline*}
Since the last term is nonnegative (by Lemma \ref{lemma1.1}) and since
\[\int_{0}^{t}K(s)\nabla u'(s)ds
       = K(t)\nabla u(t) - K_0\nabla u_0 + \int_{0}^{t}\beta(s)\nabla u(s)ds,\]
   \begin{equation}\label{eq4.8}
       \Phi(t; u) 
        \geq K(t)\left( \nabla u_{0}, \nabla u(t) \right) + \int_{0}^{t}\beta(s)(\nabla u_0,\nabla u(s)) ds  
         - \frac{K_0}{2}\left[\|\nabla u(t)\|^2 + \|\nabla u_{0}\|^2\right].
  \end{equation}
Inserting this in \eqref{eq4.4} and using again the Cauchy-Schwarz inequality,  we have
\begin{multline}\label{eq4.9}
          \|u'(t)\|^2  + 2g_0 \int_0^{t}\|u'(s)\|^2ds + \mu_0\|\nabla u(t)\|^2  
         \leq \|u_1\|^2 + (1+K_0)\|\nabla u_0\|^2  \\
        + 2\int_0^{t} \|f(s)\|  \| u'(s)\|ds 
         + 2K(t)\| \nabla u_{0}\| \|\nabla u(t) \| + 2\int_{0}^{t}\beta(s)\|\nabla u_0\|  \|\nabla u(s)\| ds.
     \end{multline}
Applying the Young's inequality and using  $K(t)\leq c_0$, then the last three terms on the right-hand side are bounded by 
\begin{multline*}
2\left(\int_0^{t} \|f(s)\|ds\right)^2 + \frac{1}{2}\|u'(\Bar{t})\|^2+
\frac{\mu_0}{2} \|\nabla u(t) \|^2 + \frac{2c_0^2}{\mu_0} \| \nabla u_{0}\|^2,    \\
+2\int_{0}^{t}\beta(s)ds \|\nabla u_0\|^2 + \frac{1}{2}\int_{0}^{t}\beta(s)  \|\nabla u(s)\|^2 ds, 
\end{multline*}
where $\|u'(\Bar{t})\|=\sup\limits_{0\leq s \leq t}\|u'(s)\|$ for some $0\le \Bar{t}\le t$. Therefore, with $c_1:=\max\limits_{0\leq t \leq \infty}\int_{0}^{t}|\beta(s)|ds $, we have
\begin{multline}\label{eq4.11}
          \|u'(t)\|^2   + \frac{\mu_0}{2}\|\nabla u(t)\|^2   
         \leq \|u_1\|^2+ \\
         \left(1+c_0+2c_1 + \frac{2c_0^2}{\mu_0} \right)\|\nabla u_0\|^2    
         +  \frac{1}{2}\int_{0}^{t}\beta(s)  \|\nabla u(s)\|^2 ds + 2\left(\int_0^{t} \|f(s)\|ds\right)^2 + \frac{1}{2}\|u'(\Bar{t})\|^2.
     \end{multline}
Choosing $t=\Bar{t}$ yields after some simple simplifications 
\[       \frac{1}{2}\|u'(\Bar{t})\|^2 
         \leq \|u_1\|^2 + \left(1+c_0+2c_1 + \frac{2c_0^2}{\mu_0} \right)\|\nabla u_0\|^2   
         +  \frac{1}{2}\int_{0}^{\Bar{t}}\beta(t)  \|\nabla u(t)\|^2 dt + 2\left(\int_0^{\Bar{t}} \|f(s)\|ds\right)^2.\]
Substituting this into the right-hand side of \eqref{eq4.11}, we obtain
\[\|u'(t)\|^2   + \frac{\mu_0}{2}\|\nabla u(t)\|^2   
         \leq C(\|u_1\|^2 +\|\nabla u_0\|^2)   
         + 4\left(\int_0^{t} \|f(s)\|ds\right)^2  +  \int_{0}^{t}\beta(s)  \|\nabla u(s)\|^2 ds.\]
Since $\int_{0}^{\infty}|\beta(t)| dt \leq c_1$, the application of Gr\"{o}nwall's lemma yields \eqref{eq2.5}. 

From  \cite[Proposition 4.4]{Cannarsa}, we conclude  that $u(t) \in H^2(\Omega)$ for each $t\in (0,T].$ To show the estimate in  \eqref{eq2.5n}, we choose $v=-\Delta u'(t)$ then follow the above derivation and  use the identity 
\[\int_0^t ( f(s), \Delta u'(s))ds=( f(t), \Delta u(t))-( f(0), \Delta u(0))-\int_0^t ( f'(s), \Delta u(s))\,ds,\]
in addition to the first achieved estimate. 
\end{proof}

\subsection{Finite-time uniqueness} 
Let $\hat{u},\tilde{u}$ be two solutions of \eqref{eq2.3}. We need to show that the difference $Y(t):=\hat{u}-\tilde{u}=0$.  From \eqref{eq2.3}, 
\begin{multline}
          (Y''(t),  v) +  G_0(\hat{u},\nabla \hat{u})(Y'(t), v) + (\nabla Y(t), \nabla v) \\
          - \int_0^{t}\beta(t-s)(\nabla Y(s),  \nabla v)ds = - [G_0(\hat{u},\nabla \hat{u})-G_0(\tilde{u},\nabla \tilde{u})]  (\tilde{u}'(t), v),\label{eq4.14}
     \end{multline}
where 
\begin{equation}\label{eq4.16}
     \begin{split}
           G_0(u(t),\nabla u(t)):=G\left(  \mu_1\| u(t)\|^2+\mu_2\|\nabla u(t)\|^2 \right)\geq g_0.
     \end{split}
     \end{equation}
Choose $v=Y'(t)$ in \eqref{eq4.14} to get
\begin{multline}\label{eq4.17}
          \frac{1}{2}\frac{d}{dt}\|Y'(t)\|^2  + g_0 \|Y'(t)\|^2 + \frac{1}{2}\frac{d}{dt}\|\nabla Y(t)\|^2   
           + \int_0^{t}K'(t-s)(\nabla Y(s),  \nabla Y'(t))ds \\
         \leq \left|G_0(\hat{u},\nabla \hat{u}) - G_0(\tilde{u},\nabla \tilde{u})\right| \|\tilde{u}'(t)\| \|Y'(t)\|.
     \end{multline}
By \eqref{eq2.5}, the assumption ($\mathbf{A2}$) and Poincar\'{e} inequality, we obtain
\begin{equation}\label{eq4.18}
     \begin{split}
         \left|G_0(\hat{u},\nabla \hat{u}) - G_0(\tilde{u},\nabla \tilde{u})\right| \|\tilde{u}'(t)\|
          \leq C( \| Y(t)\| + \|\nabla Y(t)\| ) \leq C\|\nabla Y(t)\|.
     \end{split}
     \end{equation}
 Thus, integrate \eqref{eq4.17} over the time interval $(0,t_{*})$, and then,  use $Y'(0)=Y(0)=0$, \eqref{eq4.8} and \eqref{eq4.18}, we  get
\begin{equation*}
     \begin{split}
          \|Y'(t_*)\|^2 & + 2 g_0 \int_0^{t_*}\|Y'(t)\|^2dt + \mu_0\|\nabla Y(t_*)\|^2   \leq C \int_0^{t_*}\|\nabla Y(t)\| \|Y'(t)\| dt \\
        &  
        \leq g_0 \int_0^{t_*}\| Y'(t)\|^2 dt + C\int_0^{t_*}\|\nabla Y(t)\|^2 dt.
     \end{split}
     \end{equation*}
Canceling the similar terms yields 
\[        \|Y'(t_*)\|^2  + g_0 \int_0^{t_*}\|Y'(t)\|^2dt + \mu_0\|\nabla Y(t_*)\|^2   \leq C\int_0^{t_*}\|\nabla Y(t)\|^2 dt.\]
We finally apply the Gr\"{o}nwall's lemma to complete the proof.

\section{Spatial semi-discrete scheme}\label{sec3} Given a quasiuniform partition of $\Omega$ with the diameter $h$ and let $\mathcal{S}_h\subset H^1_0(\Omega)$ be a finite-dimensional space with respect to this partition, with the following approximation property \cite{ McLean1, Mustapha, Thomee}
     \begin{equation*}
     \begin{array}{ll}
          \inf\limits_{\psi\in \mathcal{S}_h}\{ \|v-\psi\| + h\|\nabla(v-\psi)\|  \} \leq Ch^2 \|v\|_2, \quad v\in H^2(\Omega)\cap H^1_0(\Omega).
     \end{array}
     \end{equation*}
For  $t \in [0,T],$ the semi-discrete Galerkin finite element solution $u_h(t)\in \mathcal{S}_h$ is determined by
   \begin{align}\label{eq2.6}
       (u''_h(t),v_h)  & + G\left(   \mu_1\left\|u_h(t)\right\|^2+\mu_2\left\|\nabla u_h(t)\right\|^2 \right) (u'_h(t),v_h) +  (\nabla u_h(t),\nabla v_h) \\
        &\nonumber- \int_0^{t}\beta(t-s)(\nabla u_h(s),\nabla v_h)ds = (f(t), v_h), \quad {\rm for~any}~~v_h\in \mathcal{S}_h,
   \end{align}
with $u_h(0) = u_{0h}:= w_h(0)$ and $ u'_h(0) = u_{1h}:=w_h'(0)$, where 
for each $t\in (0,T],$  the Ritz projection 
$w_h(t): H^1_0(\Omega)\rightarrow \mathcal{S}_h$ is defined by
\begin{equation}\label{eq2.9}
        (\nabla \eta(t), \nabla v_h)=0,  \quad  \text{for any} \quad v_h\in \mathcal{S}_h, ~~{\rm where}~~\eta(t) = u(t) - w_h(t).
   \end{equation}

Following the proof of the regularity property in  Theorem \ref{theorem2.1}, we obtain the stability of the semi-discrete numerical solution in the next theorem.
\begin{theorem}\label{theorem2.2}
    Under the assumptions of Theorem \ref{theorem2.1}, the  solution $u_h$ of  \eqref{eq2.6} satisfies
\begin{equation*}
     \begin{array}{ll}
          \|u'_h(t)\|  + \|\nabla u_h(t)\|   \leq C\left( \|u_{1h}\|+ \|\nabla u_{0h}\|  +
        \int_0^{t} \|f(s)\|ds\right), \text{ for } 0<t\leq T.
     \end{array}
     \end{equation*}
\end{theorem}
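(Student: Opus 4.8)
The plan is to mimic the proof of the regularity estimate \eqref{eq2.5} in Theorem \ref{theorem2.1}, but working entirely within the finite-dimensional space $\mathcal{S}_h$. First I would take $v_h = u_h'(t)$ in the semi-discrete weak form \eqref{eq2.6}. Since $u_h'(t)\in\mathcal{S}_h$, this is an admissible test function, and we obtain the discrete energy identity
\begin{multline*}
\frac{1}{2}\frac{d}{dt}\|u_h'(t)\|^2 + G\!\left(\mu_1\|u_h(t)\|^2+\mu_2\|\nabla u_h(t)\|^2\right)\|u_h'(t)\|^2 + \frac{1}{2}\frac{d}{dt}\|\nabla u_h(t)\|^2 \\
- \int_0^t \beta(t-s)(\nabla u_h(s),\nabla u_h'(t))\,ds = (f(t), u_h'(t)).
\end{multline*}
Multiplying by $2$, integrating over $(0,t)$, and invoking $(\mathbf{A1})$ to drop the nonnegative damping contribution (or keep the $2g_0\int_0^t\|u_h'(s)\|^2\,ds$ term on the left, exactly as in \eqref{eq4.4}), gives the discrete analogue of \eqref{eq4.4} with $u$ replaced by $u_h$ and $u_0,u_1$ replaced by $u_{0h},u_{1h}$.

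The second step is to handle the memory term $\Phi(t;u_h):=\int_0^t\int_0^s K'(s-z)(\nabla u_h(z),\nabla u_h'(s))\,dz\,ds$ exactly as in the continuous proof: integrate by parts in $s$, use $K'=-\beta$, and note that the resulting double-integral term $\int_0^t\int_0^s K(s-z)(\nabla u_h'(z),\nabla u_h'(s))\,dz\,ds$ is nonnegative because $K$ is of positive type by Lemma \ref{lemma1.1}. This yields the lower bound \eqref{eq4.8} with $u$ replaced by $u_h$. Inserting this into the discrete energy inequality and applying Cauchy--Schwarz, Young's inequality with the bounds $K(t)\le c_0$ and $c_1:=\max_{t\ge 0}\int_0^t|\beta(s)|\,ds<\infty$, we arrive at the discrete version of \eqref{eq4.11}, namely
\[
\|u_h'(t)\|^2 + \frac{\mu_0}{2}\|\nabla u_h(t)\|^2 \le C(\|u_{1h}\|^2+\|\nabla u_{0h}\|^2) + 2\Big(\int_0^t\|f(s)\|\,ds\Big)^2 + \frac12\|u_h'(\bar t)\|^2 + \frac12\int_0^t\beta(s)\|\nabla u_h(s)\|^2\,ds,
\]
where $\|u_h'(\bar t)\| = \sup_{0\le s\le t}\|u_h'(s)\|$.

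The third step is identical to the continuous case: evaluate at $t=\bar t$ to absorb the $\frac12\|u_h'(\bar t)\|^2$ term, substitute back, and then apply Gr\"onwall's lemma using $\int_0^\infty|\beta(s)|\,ds\le c_1$ to eliminate the $\int_0^t\beta(s)\|\nabla u_h(s)\|^2\,ds$ term. This produces $\|u_h'(t)\|^2+\|\nabla u_h(t)\|^2 \le C(\|u_{1h}\|^2+\|\nabla u_{0h}\|^2+(\int_0^t\|f(s)\|\,ds)^2)$, from which the stated estimate follows by taking square roots and using $\sqrt{a^2+b^2+c^2}\le a+b+c$. I do not anticipate any genuine obstacle here: the only conceptual point is that every manipulation in the continuous proof used the test function $v=u'(t)$, which in the discrete setting is replaced by $v_h=u_h'(t)\in\mathcal{S}_h$, and that the positivity of $K$ (Lemma \ref{lemma1.1}) and the lower bound $(\mathbf{A1})$ on $G$ do not see the discretization at all. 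The mildly delicate bookkeeping step --- the $\sup$-trick with $\bar t$ to close the estimate on $\|u_h'\|$ --- is already carried out in the proof of Theorem \ref{theorem2.1} and transfers verbatim.
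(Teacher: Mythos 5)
Your proposal is correct and is exactly what the paper does: the paper states Theorem \ref{theorem2.2} with the remark that it follows by repeating the proof of Theorem \ref{theorem2.1} with the admissible test function $v_h=u_h'(t)\in\mathcal{S}_h$, and your step-by-step transcription (energy identity, positivity of $K$ via Lemma \ref{lemma1.1}, the $\bar t$ sup-trick, and Gr\"onwall) is precisely that argument.
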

In the next theorem, we show the convergence of the semi-discrete Galerkin finite element  solution. For convenience, let $E(t)=u(t)-u_h(t)$.
\begin{theorem}\label{theorem2.3} Let $u_0\in H^2(\Omega)$, $u_1\in H^1(\Omega)$, and $f,f'\in L^1(0,T;L^2(\Omega))$.
Under the assumptions $(\mathbf{A1})$-$(\mathbf{A2})$, and for  $0<t\leq T$, we have
\begin{equation*}
    \left\|E'(t)\right\|^2  
    + \left\|\nabla E(t)\right\|^2                 \leq  Ch^2\int_0^{t} \left( \|u''(s)\|^2_1  + \| u'(s)\|^2_1 + \| u(s)\|^2_2  \right)ds +Ch^2\|u(t)\|_1^2.
   \end{equation*}
\end{theorem}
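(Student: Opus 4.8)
The plan is to split the error through the Ritz projection \eqref{eq2.9}: write $E(t)=\eta(t)+\theta(t)$ with $\eta(t)=u(t)-w_h(t)$ and $\theta(t)=w_h(t)-u_h(t)\in\mathcal{S}_h$. The component $\eta$, together with $\eta'$ and $\eta''$, is controlled solely by the approximation property of $\mathcal{S}_h$ and the Aubin--Nitsche argument for the Ritz projection, so it suffices to estimate $\theta$. Since $u_h(0)=w_h(0)$ and $u_h'(0)=w_h'(0)$, we have $\theta(0)=\theta'(0)=0$, so every initial-data contribution in the energy argument for $\theta$ disappears.

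Subtracting \eqref{eq2.6} from \eqref{eq2.3} with $v=v_h\in\mathcal{S}_h$ and using $(\nabla\eta(t),\nabla v_h)=0$ to replace $\nabla E$ by $\nabla\theta$ in the stiffness and memory terms, and writing $G_0(u,\nabla u)u'-G_0(u_h,\nabla u_h)u_h'=G_0(u_h,\nabla u_h)(\eta'+\theta')+[G_0(u,\nabla u)-G_0(u_h,\nabla u_h)]u'$ with $G_0$ as in \eqref{eq4.16}, the error equation becomes
\begin{multline*}
(\theta''(t),v_h)+G_0(u_h,\nabla u_h)(\theta'(t),v_h)+(\nabla\theta(t),\nabla v_h)-\int_0^t\beta(t-s)(\nabla\theta(s),\nabla v_h)\,ds\\
=-(\eta''(t),v_h)-G_0(u_h,\nabla u_h)(\eta'(t),v_h)-\big[G_0(u,\nabla u)-G_0(u_h,\nabla u_h)\big](u'(t),v_h).
\end{multline*}
I would then choose $v_h=\theta'(t)$ and integrate over $(0,t)$. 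The left-hand side is handled exactly as in the proof of Theorem \ref{theorem2.1}: the time-derivative terms give $\|\theta'(t)\|^2+\|\nabla\theta(t)\|^2$ with no initial-data terms; the damping term gives $2\int_0^t G_0(u_h,\nabla u_h)\|\theta'(s)\|^2\,ds\ge 2g_0\int_0^t\|\theta'(s)\|^2\,ds$ by $(\mathbf{A1})$; and the memory term $2\Phi(t;\theta)$ is bounded below, via the integration-by-parts identity behind \eqref{eq4.8} and Lemma \ref{lemma1.1}, by $-K_0\|\nabla\theta(t)\|^2$ (the $\nabla\theta(0)$ contributions vanishing), leaving the coercive quantity $(1-K_0)\|\nabla\theta(t)\|^2$ with $1-K_0>0$.

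For the right-hand side, the terms $(\eta''(t),\theta'(t))$ and $G_0(u_h,\nabla u_h)(\eta'(t),\theta'(t))$ are controlled by Cauchy--Schwarz and Young's inequality --- using that $G_0(u_h,\nabla u_h)$ is bounded above by $(\mathbf{A2})$ and the semi-discrete stability of Theorem \ref{theorem2.2} --- so that the $\|\theta'\|^2$-parts are absorbed into $2g_0\int_0^t\|\theta'\|^2\,ds$ and the $\eta$-parts are bounded by the Ritz projection error estimates, producing the $Ch^2$ time-integral terms on the right of the statement. The main obstacle is the nonlinear-nonlocal term $\big[G_0(u,\nabla u)-G_0(u_h,\nabla u_h)\big](u'(t),\theta'(t))$: rather than inserting intermediate quantities as in \eqref{eq4.18}, I would use the fundamental theorem of calculus along the segment $\varphi_\vartheta:=(1-\vartheta)u_h(t)+\vartheta u(t)$, $\vartheta\in[0,1]$, writing
\[
G_0(u,\nabla u)-G_0(u_h,\nabla u_h)=\int_0^1 G'\!\big(\mu_1\|\varphi_\vartheta\|^2+\mu_2\|\nabla\varphi_\vartheta\|^2\big)\big(2\mu_1(\varphi_\vartheta,E(t))+2\mu_2(\nabla\varphi_\vartheta,\nabla E(t))\big)\,d\vartheta,
\]
so that $E$ (hence its split $\nabla E=\nabla\eta+\nabla\theta$) enters linearly. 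Since $\|\varphi_\vartheta\|$, $\|\nabla\varphi_\vartheta\|$ and $\|u'(t)\|$ are bounded uniformly in $t,\vartheta$ by Theorems \ref{theorem2.1}--\ref{theorem2.2}, and $|G'|\le\mathcal{L}$, Cauchy--Schwarz and Poincar\'{e}'s inequality give $\big|[G_0(u,\nabla u)-G_0(u_h,\nabla u_h)](u'(t),\theta'(t))\big|\le C(\|\nabla\eta(t)\|+\|\nabla\theta(t)\|)\|\theta'(t)\|$; a further use of Young's inequality moves the $\|\theta'\|^2$-part to the left and leaves $C\int_0^t\|\nabla\theta(s)\|^2\,ds$ plus a Ritz-error term of the stated form. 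Collecting everything, one arrives at $\|\theta'(t)\|^2+(1-K_0)\|\nabla\theta(t)\|^2\le C\int_0^t\|\nabla\theta(s)\|^2\,ds+Ch^2\int_0^t(\|u''\|_1^2+\|u'\|_1^2+\|u\|_2^2)\,ds$, and Gr\"{o}nwall's lemma closes the estimate for $\theta$; combining with the pointwise and integrated Ritz bounds for $\eta,\eta'$ and $\|E\|^2+\|\nabla E\|^2\le 2(\|\eta\|^2+\|\theta\|^2+\|\nabla\eta\|^2+\|\nabla\theta\|^2)$ yields the claim. The delicate point is precisely that the $\varphi_\vartheta$-representation keeps the $G$-difference in one integrated form, avoiding artificial intermediate terms and the attendant loss of accuracy.
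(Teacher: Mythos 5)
Your proposal is correct, and its skeleton --- the Ritz-projection splitting with $\theta(0)=\theta'(0)=0$, testing with the discrete component's time derivative, the lower bound $\Phi(t;\theta)\ge -\tfrac{K_0}{2}\|\nabla\theta(t)\|^2$ from Lemma \ref{lemma1.1}, absorption by Young's inequality, and Gr\"{o}nwall --- coincides with the paper's. Where you genuinely diverge is in the treatment of the nonlinear--nonlocal term, which is the crux. The paper keeps the pair $G_0(u_h,\nabla u_h)u_h'-G_0(w_h,\nabla w_h)w_h'$ on the left, applies the fundamental theorem of calculus along the segment from $w_h$ to $u_h$ (its $\varphi_\vartheta$ in \eqref{qqq1}), extracting the coercive piece $g_0\|\xi'\|^2$ plus the remainders $J_1,J_2$ of \eqref{eq5.4}, and then bounds the mismatch $J_3$ by a further split; this requires the a priori bounds \eqref{eq5.7}--\eqref{eq5.8} on $\|w_h'\|$ and $\|\xi'\|$, hence the extra regularity estimate \eqref{eq2.5n}. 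You instead use the identity $G_0(u,\nabla u)u'-G_0(u_h,\nabla u_h)u_h'=G_0(u_h,\nabla u_h)E'+[G_0(u,\nabla u)-G_0(u_h,\nabla u_h)]u'$, so that coercivity of the damping term follows directly from $(\mathbf{A1})$ applied to $G_0(u_h,\nabla u_h)$, and the scalar $G$-difference multiplies $u'(t)$, whose $L^2$-norm is bounded by \eqref{eq2.5}; this is precisely the mechanism of the paper's uniqueness argument \eqref{eq4.14}--\eqref{eq4.18} transplanted to the error equation, with $u$ and $u_h$ playing the roles of the two solutions. Your route is lighter (no $J_1,J_2$, only $\|u'\|$, $\|\nabla u\|$, $\|\nabla u_h\|$ bounds are needed for the nonlinear term) and loses no accuracy, since the residual $C\|\nabla\eta\|^2$ it produces is already of the target order $O(h^2)$; the paper's organization has the advantage that the same $\hat J_q$ machinery is reused verbatim in the fully discrete proof of Theorem \ref{theorem2.5}. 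One cosmetic caveat: your $\varphi_\vartheta$ denotes the interpolating \emph{function} $(1-\vartheta)u_h+\vartheta u$, whereas the paper's $\varphi_\vartheta$ in \eqref{qqq1} is the scalar $\mu_1\|\cdot\|^2+\mu_2\|\nabla\cdot\|^2$ evaluated along a (different) segment, so the notation should not be conflated.
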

\begin{proof} First, recall the definition of $\eta$  in \eqref{eq2.9}. The following Ritz projection error estimate is well known  \cite{Thomee}: for $r=0,1$, and with $\partial_t$ denotes the partial time derivative, 
\begin{equation}\label{eq2.10}
   \begin{array}{ll}
      \dsp  \|\partial_t^\theta\eta(t)\|+h\|
\nabla(\partial_t^\theta\eta)(t)\| \leq Ch^{r+1}\|\partial_t^\theta u(t)\|_{H^{r+1}(\Omega)},  \quad \theta=0,1,2.
   \end{array}
   \end{equation}
   To derive the error bound in Theorem \ref{theorem2.3}, we decompose $E$ as: 
\begin{equation}\label{eq2.11}
   \begin{split}
        & E(t)=u(t)-u_h(t)=[u(t)-w_h(t)]-[u_h(t)-w_h(t)]:=\eta(t)-\xi(t), 
   \end{split}
\end{equation}
and the main task is to estimate  $\xi(t)$. We use \eqref{eq2.6}, \eqref{eq2.3} and  \eqref{eq2.9} to get
\begin{align*}
        (\xi''(t), v_h) &+ \left( G_0(u_h, \nabla u_h)u_h'(t) - G_0(w_h, \nabla w_h) w_h'(t), v_h \right) 
         \nonumber\\  & + (\nabla \xi (t), \nabla v_h)   -  \int_{0}^{t}\beta(t-s)(\nabla \xi(s), \nabla v_h) ds \\
        \nonumber& =  (\eta''(t), v_h) + \left( G_0(u, \nabla u)u'(t) - G_0(w_h, \nabla w_h) w_h'(t), v_h \right), 
   \end{align*}
for any $v_h \in \mathcal{S}_h$, where $G_0(y,z)$ is defined in \eqref{eq4.16}. Choosing $v_h=\xi'(t)$ gives
\begin{equation}\label{eq5.1}
   \begin{split}
       \frac{1}{2}\frac{d}{dt}& \|\xi'(t)\|^2   + \left( G_0(u_h, \nabla u_h)u_h'(t) - G_0(w_h, \nabla w_h) w_h'(t), \xi'(t) \right) 
         \\  & + \frac{1}{2}\frac{d}{dt}\|\nabla \xi (t)\|^2   +  \int_{0}^{t}K'(t-s)(\nabla \xi(s), \nabla \xi'(t)) ds \\
        & =  (\eta'', \xi'(t))+ \left( G_0(u, \nabla u)u'(t) - G_0(w_h, \nabla w_h) w_h'(t), \xi'(t) \right).
   \end{split}
   \end{equation}
Denoting \begin{equation}\label{qqq1}
   \varphi_{\vartheta}(t):=  \mu_1\| \xi(t) \vartheta + w_h(t)\|^2+\mu_2\|\nabla \xi(t)\vartheta+\nabla w_h(t)\|^2,
   \end{equation}   
and so,  the second term on the left-hand side of \eqref{eq5.1} can be rewritten as:
\begin{equation}\label{eq5.2}
   \begin{split}
       B(t):&=\big( G_0(u_h, \nabla u_h)u_h'(t) - G_0(w_h, \nabla w_h) w_h'(t), \xi'(t) \big) \\
       & = \left(\int_0^1 \frac{d}{d\vartheta} \left[G(\varphi_{\vartheta}(t))(\vartheta\xi'(t) + w_h'(t) )\right]  d\vartheta, \xi'(t)\right) \\
       & = \int_0^1 G'(\varphi_{\vartheta}(t)) \frac{d \varphi_{\vartheta}(t)}{d \vartheta} \left[ \vartheta\|\xi'(t)\|^2 + (w_h', \xi'(t))\right] d\vartheta 
       + \int_0^1 G(\varphi_{\vartheta}(t)) d\vartheta \|\xi'(t)\|^2,
   \end{split}
   \end{equation}
where
\begin{equation*}
    \frac{d \varphi_{\vartheta}(t)}{d \vartheta}=2\vartheta \left(\mu_1\| \xi(t)\|^2+\mu_2\|\nabla\xi(t)\|^2 \right) + 2\left[ \mu_1(w_h(t), \xi(t)) +  \mu_2(\nabla w_h(t), \nabla \xi(t))\right],
\end{equation*}
and
$ \int_0^1 G(\varphi_{\vartheta}(t)) d\vartheta \|\xi'(t)\|^2 \geq g_0 \|\xi'(t)\|^2$. Then we have
\begin{equation}\label{eq5.3}
   \begin{split}
       B(t) \geq  2(J_1(t; \vartheta, w_h)+J_2(t; \vartheta, w_h)) + g_0 \|\xi'(t)\|^2,  \quad t>0,
   \end{split}
   \end{equation}
where 
\begin{equation}\label{eq5.4}
   \begin{split}
      & J_1(t; \vartheta, w_h) = \int_0^1 G'_{\vartheta}  \left(\mu_1\| \xi(t)\|^2+\mu_2\|\nabla\xi(t)\|^2 \right) [\vartheta^2\|\xi'(t)\|^2+ \vartheta(w_h'(t), \xi'(t))] d\vartheta, \\
      & J_2(t; \vartheta, w_h) = \int_0^1 G'_{\vartheta}\Big( \mu_1(w_h(t), \xi(t)) +  \mu_2(\nabla w_h(t), \nabla\xi(t))\Big)[\vartheta \|\xi'(t)\|^2 +(w_h'(t), \xi'(t))] d\vartheta,
   \end{split}
   \end{equation}
with $G'_{\vartheta}=G'(\varphi_{\vartheta}(t))$. Substitute \eqref{eq5.2} and \eqref{eq5.3} into \eqref{eq5.1}, and use the inequality  
\begin{equation}\label{eq5.5}
        (\eta'', \xi'(t)) 
         \leq  \|\eta''\| \|\xi'(t)\| \leq \frac{g_0}{8}\|\xi'(t)\|^2 +  \frac{2}{g_0}\|\eta''\|^2,
   \end{equation} 
we conclude that 
\begin{multline}\label{eq5.6}
       \frac{1}{2} \frac{d}{dt}\|\xi'(t)\|^2  + \frac{7g_0}{8}  \|\xi'(t)\|^2 + \frac{1}{2}\frac{d}{dt}\|\nabla \xi (t)\|^2 
         \\   +  \int_{0}^{t}K'(t-s)(\nabla \xi(s), \nabla \xi'(t)) ds\leq \frac{2}{g_0}\|\eta''\|^2 + 2\sum\limits_{q=1}^3 \left|J_q(t; \vartheta, w_h)\right|,
   \end{multline} 
   where  $$J_3(t; \vartheta, w_h)=\left( G_0(u, \nabla u)u'(t) - G_0(w_h, \nabla w_h) w_h'(t), \xi'(t) \right).$$
The next task is to  estimate the terms $\left|J_q(t; \vartheta, w_h)\right|$ for $q=1,2,3$.
From the definition of $w_h$,   $\|\nabla w_h'(t)\|\le \|\nabla u'(t)\|$, and so, using  the Poincar\'{e} inequality and \eqref{eq2.5n},  we get 
\begin{equation}\label{eq5.7}
      \|w_h'(t)\|  \le   C\|\nabla u'(t)\|  
       \leq C\left( \|\nabla u_1\| + \|\Delta u_0\| + \|f(0)\|  +  \int_{0}^{t}(\|f(s)\|+\|f'(s)\|)ds \right), 
   \end{equation} 
which, together with Theorem \ref{theorem2.2} and the Poincar\'{e} inequality, gives
\begin{equation}\label{eq5.8}
       \|\xi'(t)\|   \leq \|w_h'(t)\| + \|u_h'(t)\|  \leq C\left( \|\nabla u_1\| + \|\Delta u_0\| + \|f(0)\| + \int_0^t (\|f(s)\|+\|f'(s)\|)ds \right).
   \end{equation} 
Using  \eqref{eq5.7}, \eqref{eq5.8}, and  the Poincar\'{e} inequality,  we obtain
\begin{equation}\label{eq5.9}
       \left|J_1(t; \vartheta, w_h)\right| \leq C(\| \xi(t)\|^2 + \|\nabla \xi(t)\|^2)\leq C\|\nabla \xi(t)\|^2. 
   \end{equation}
From the definition of $w_h$,   $\|\nabla w_h(t)\|\le \|\nabla u(t)\|$,  and thus, using \eqref{eq2.5},  we notice that
\begin{equation}\label{eq5.11}
       \|\nabla w_h(t)\|  \leq C\left( \|\nabla u_1\|+ \|\nabla u_0\| +
        \int_{0}^{t}\|f(s)\|ds \right),
   \end{equation}
and hence, using this, in addition to  the estimates in \eqref{eq5.7} and \eqref{eq5.8}, and \eqref{eq5.11},  and the Young's inequality yield 
\begin{equation*}
       \left|J_2(t; \vartheta, w_h)\right| \leq C(\|\nabla \xi(t)\| + \|\xi(t)\|) \|\xi'(t)\| \leq  C\|\nabla \xi(t)\| \|\xi'\| 
        \leq \frac{g_0}{4}\|\xi'(t)\|^2 + C\|\nabla \xi(t)\|^2.
   \end{equation*}
To bound $\left|J_3(t; \vartheta, w_h)\right|$, we apply the assumptions ($\mathbf{A1}$)-($\mathbf{A2}$) and the Cauchy-Schwarz inequality to get
\begin{equation*}
   \begin{split}
       J_3(t; \vartheta, w_h)
       &= G_0(u,\nabla u) (u'- w_h',  \xi') + ( [G_0(u,\nabla u)-G_0(w_h, \nabla w_h)]w_h',  \xi') \\
       & \leq 
       g_1 \|\eta'\| \|\xi'(t)\|  +
       \mathcal{L} \left|  \mathcal{Q}[u,\nabla u](t) - \mathcal{Q}[w_h,\nabla w_h](t)  \right| \|w_h'\| \|\xi'(t)\|,
   \end{split} 
   \end{equation*}
where $\mathcal{Q}[y,z](t):=\mu_1\|y(t)\|^2+\mu_2\|z(t)\|^2$. Then we utilize the inequality $\|y\|^2-\|z\|^2\leq (\|y\|+\|z\|)\|y-z\|$, \eqref{eq5.7}, \eqref{eq5.11} and Young's inequality to get
\begin{equation}\label{eq5.14}
   \begin{split}
       |J_3(t; \vartheta, w_h)|
       & \leq \frac{g_0}{8}\|\xi'(t)\|^2 + \frac{2g_1}{g_0}\|\eta'\|^2 + C(  \|\eta(t)\|^2 + \|\nabla\eta(t)\|^2).
   \end{split} 
   \end{equation}
Now we invoke \eqref{eq5.9}--\eqref{eq5.14} in \eqref{eq5.6} to obtain
\begin{multline}\label{eq5.15}
       \frac{1}{2}\frac{d}{dt}\|\xi'(t)\|^2  + \frac{g_0}{2}  \|\xi'(t)\|^2 + \frac{1}{2}\frac{d}{dt}\|\nabla \xi (t)\|^2     
          +  \int_{0}^{t}K'(t-s)(\nabla \xi(s), \nabla \xi'(t)) ds 
         \\
         \leq \frac{2}{g_0}\|\eta''\|^2   + \frac{2g_1}{g_0}\|\eta'\|^2 + C \|\nabla\eta\|^2 + C\|\nabla \xi(t)\|^2.
   \end{multline} 
We incorporate $\xi'(0) = \xi(0)=0$ to integrate \eqref{eq5.15} over the time interval $(0,t)$ and apply
   \begin{multline*}
       \Phi(t; \xi) 
        \geq K(t)\left( \nabla \xi(0), \nabla \xi(t) \right) + \int_{0}^{t}\beta(s)(\nabla \xi(0),\nabla \xi(s)) ds  
         \\
         - \frac{K_0}{2}\left[\|\nabla \xi(t)\|^2 + \|\nabla \xi(0)\|^2\right] = -\frac{K_0}{2}\|\nabla \xi(t)\|^2
  \end{multline*}
to get
\[       \frac{1}{2}\|\xi'(t)\|^2  + \frac{g_0}{2}  \int_0^{t}\|\xi'\|^2ds + \frac{\mu_0}{2}\|\nabla \xi (t)\|^2     
         \leq  C \int_0^{t} \left(\|\eta''\|^2 + \|\eta'\|^2 +   \|\nabla\eta\|^2 \right)ds  + C \int_0^{t}\|\nabla \xi\|^2 ds.\]
Apply the Gr\"{o}nwall's lemma to get
\[      \|\xi'(t)\|^2  + g_0\int_0^{t}\|\xi'\|^2ds + \mu_0\|\nabla \xi (t)\|^2 
         \leq  C \int_0^{t} \left(\|\eta''\|^2 + \|\eta'\|^2  +   \|\nabla\eta\|^2 \right)ds,\]
which, together with \eqref{eq2.10}, gives
\[ \|\xi'(t)\|^2   + \mu_0\|\nabla \xi (t)\|^2  
         \leq  Ch^2 \int_0^{t} \left( \|u''\|^2_1  + \| u'\|^2_1  + \| u\|^2_2  \right)ds. 
   \]
We combine this and \eqref{eq2.10} to complete the proof.
\end{proof}

\section{Fully-discrete scheme} \label{sec4}
In this section, we shall establish and analyze a fully discrete scheme.
Using $K'(t)=-\beta(t)$ and denoting $\mu_0:=1-K_0$, we have
\begin{equation*}
    \int_0^t K'(t-s)\Delta u(s)ds = K(t)\Delta u_0 - K_0 \Delta u(t) + \int_0^t K(t-s)\Delta u'(s) ds.
\end{equation*}
Thus we reformulate \eqref{eq1.1} as
\begin{align}
    & u''(t) + q(t)\,u'(t)  - \mu_0 \Delta u(t) - \int_0^t K(t-s)\Delta u'(s) ds = f(t) + K(t)\Delta u_0. \label{ModelA}
\end{align}
The task now is to   propose and analyze a fully discrete scheme for solving problem \eqref{ModelA}.
Let $\tau$ be the temporal step size and $U^n\in \mathcal{S}_h$ with $0\leq n\in\mathbb N$ be the approximation of $u(t_n)$ with $t_n=n\tau$. Let  $\delta_tU^n=(U^n-U^{n-1})/\tau$, $\bar{\delta}_tU^n=(U^{n+1}-U^{n-1})/(2\tau)$, $\delta_t^2 U^n=\delta_t(\delta_tU^{n+1})$, and $\widetilde{U}^n=(U^{n+1}+U^{n-1})/2$, $n\geq 1$. Using a linear polynomial interpolation
\begin{align*}
    \varphi(s)\approx \mathcal{N}_1(s):=\varphi(t_{p-1})+\frac{s-t_{p-1}}{\tau}[\varphi(t_{p})-\varphi(t_{p-1})],\quad{\rm for}~~s \in [t_{p-1},t_p]\,,
\end{align*}   we  approximate the integral term $\int_{0}^{t_n}K(t_n-s)\varphi(s)ds$  by  
\begin{equation}\label{eq2.14}
        Q_n(\varphi) = \sum\limits_{p=1}^{n}\int_{t_{p-1}}^{t_p}K(t_n-s)\mathcal{N}_1(s)ds= \sum\limits_{p=0}^{n}\widetilde{\kappa}_{np}\varphi(t_p), \quad n\geq 1,
   \end{equation}
   where  
   \begin{equation}\label{coeff}
   \begin{split} 
        \widetilde{\kappa}_{np} = \int_{-\min(\tau, t_p)}^{\min(\tau, t_{n-p})} K(t_n-s) \max\left( 1-\left| \frac{s}{\tau} \right|, 0 \right) dt,  \quad 0\leq p \leq n.
   \end{split} 
   \end{equation}
Using  \eqref{eq2.14}  and the notations  $u^n:=u(t_n)$ and $f^n:=f(t_n)$, we write \eqref{ModelA} at $t_n$ as 
\begin{align}
    &\delta_t^2 u^n  + q(t_n)\bar{\delta}_tu^n - \mu_0 \Delta \widetilde{u}^n - \sum\limits_{p=0}^{n}\widetilde{\kappa}_{np}\Delta \bar{\delta}_t u^p = f^n + K(t_n)\Delta u_0 + \mathcal{R}^n, \label{ModelC}\\
    & \delta_t u^1  = u_1 + \frac{\tau}{2}u_2 + \mathcal{R}^0, \quad u^0 = u_0=u(0),~u_1=u'(0),~u_2=u''(0), \label{ModelE}
\end{align}
in which $\bar{\delta}_t u^0 = u_1$,  $\mathcal{R}^n:=\sum\limits_{l=1}^{4}R_l^n$ with $n\geq 1$, $u_2=u''(0)=-q(0)u_1 + \Delta u_0 + f^0$, and 
\begin{align}
    &R_1^n = \delta_t^2 u^n - u''(t_n),\quad  R_2^n = q(t_n)\left[ \bar{\delta}_tu^n - u'(t_n) \right],\quad R_3^n = \mu_0\left[ \Delta \widetilde{u}^n - \Delta u^n \right], \label{qq1} \\
    &  R_4^n = \int_0^{t_n} K(t_n-s)\Delta u'(s) ds - \sum\limits_{p=0}^{n}\widetilde{\kappa}_{np}\Delta  u'(t_p) + \sum\limits_{p=1}^{n}\widetilde{\kappa}_{np}\Delta  (u'(t_p) - \bar{\delta}_tu^p), \label{qq4} \\
    & \mathcal{R}^0 = \frac{1}{2\tau}\int_0^{\tau} (\tau - s)^2 u'''(s)dt. \label{qq5}
\end{align}
The weak form of \eqref{ModelC} is defined as: for any $\psi\in H_0^1(\Omega)$ and for $n\ge 1,$  
   \begin{multline}\label{qq6}
        (\delta_t^2u^n,\psi) + G\left(   \mu_1\|u^n\|^2+\mu_2\|\nabla u^n\|^2 \right) (\bar{\delta}_tu^n,\psi) 
         +  \mu_0(\nabla \widetilde{u}^n,\nabla \psi) \\ + \sum\limits_{p=0}^{n}\widetilde{\kappa}_{np}(\nabla \bar{\delta}_t u^p,\nabla \psi)
          = (f^n, \psi) + K(t_n)(\nabla u^0, \nabla\psi) + (\mathcal{R}^n, \psi).
   \end{multline}
We drop the truncation errors and define then the fully discrete Galerkin scheme as: for $n\ge 1,$ 
   \begin{multline}\label{eq2.15}
        (\delta_t^2U^n,\psi_h) + G\left(   \mu_1\|U^n\|^2+\mu_2\|\nabla U^n\|^2 \right) (\bar{\delta}_tU^n,\psi_h) 
         +  \mu_0(\nabla \widetilde{U}^n,\nabla \psi_h)\\
         + \sum\limits_{p=0}^{n}\widetilde{\kappa}_{np}(\nabla \bar{\delta}_t U^p,\nabla \psi_h)
          = (f^n, \psi_h) + K(t_n)(\nabla U^0, \nabla\psi_h),
   \end{multline}
for any  $\psi_h\in \mathcal{S}_h$, 
given the initial values
\begin{equation}\label{eq2.16}
    U^0=u_{0h}, \quad U^1=u_{0h}+ \tau  u_{1h}+\frac{\tau^2}{2}u_{2h},
\end{equation}
where $u_{jh}$, $j=0,1,2$ are suitable approximations of $u_j$ in $\mathcal{S}_h$.

\subsection{Stability analysis}
We show the stability  of the fully discrete solution. 
\begin{theorem}\label{theorem2.4}
Suppose that $\beta(t)$ is given by \eqref{eq1.5} and the assumptions $(\mathbf{A1})$-$(\mathbf{A2})$ hold with $\tau\sum_{i=0}^n\|f^i\|<\infty$ for $0<T< \infty$. Then for any $1\leq m\leq T/\tau$, it holds 
\begin{equation*}
       \left\|U^m\right\|_A  \leq    \left\|U^0\right\|_A  + C\left( \|\nabla u_{0h}\| + \|\nabla u_{1h}\| + \tau\sum_{i=0}^n\|f^i\| \right),
   \end{equation*}
where $C$ is independent from $T$, $m$ and $\tau$, and the semi-norm $\|\cdot\|_A$ is defined as
 \begin{equation}\label{eq6.3}
   \begin{split}
        \|U^m\|_A = \sqrt{ \|\delta_tU^{m+1}\|^2 + \frac{\mu_0}{2} \left(\|\nabla U^{m+1}\|^2+\|\nabla U^{m}\|^2 \right) }. 
   \end{split}
   \end{equation}
   Consequently, the fully-discrete scheme (\ref{eq2.15}) has a unique solution.
\end{theorem}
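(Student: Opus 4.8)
The plan is to carry out a discrete energy argument mirroring the choice $v=u'(t)$ in the proof of Theorem~\ref{theorem2.1}: take $\psi_h=\bar{\delta}_tU^n$ in \eqref{eq2.15}, sum over $n=1,\dots,m$, and multiply by $2\tau$. Using $\delta_t^2U^n=(\delta_tU^{n+1}-\delta_tU^n)/\tau$ together with $\bar{\delta}_tU^n=(\delta_tU^{n+1}+\delta_tU^n)/2$, the inertial part telescopes to $\|\delta_tU^{m+1}\|^2-\|\delta_tU^1\|^2$; using $\widetilde{U}^n=(U^{n+1}+U^{n-1})/2$ and $\bar{\delta}_tU^n=(U^{n+1}-U^{n-1})/(2\tau)$, the elastic part telescopes to $\tfrac{\mu_0}{2}(\|\nabla U^{m+1}\|^2+\|\nabla U^{m}\|^2-\|\nabla U^{1}\|^2-\|\nabla U^{0}\|^2)$. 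Their sum is exactly $\|U^m\|_A^2-\|U^0\|_A^2$, which is the very reason for introducing the semi-norm \eqref{eq6.3}. The damping contribution $\sum_{n=1}^m 2\tau\,G(\mu_1\|U^n\|^2+\mu_2\|\nabla U^n\|^2)\|\bar{\delta}_tU^n\|^2\ge 2\tau g_0\sum_{n=1}^m\|\bar{\delta}_tU^n\|^2\ge0$ by $(\mathbf{A1})$ and is simply discarded.

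The decisive term is the memory sum $S_m:=\sum_{n=1}^m 2\tau\sum_{p=0}^n\widetilde{\kappa}_{np}(\nabla\bar{\delta}_tU^p,\nabla\bar{\delta}_tU^n)$. Since the reformulation \eqref{ModelA} has already absorbed the $-K_0\Delta u$ term into $\mu_0=1-K_0$, the weights \eqref{coeff} are the interpolatory-quadrature weights of the \emph{positive-type} kernel $K$, so I would establish a discrete analogue of Lemma~\ref{lemma1.1}: $\sum_{n=1}^m\sum_{p=1}^n\widetilde{\kappa}_{np}(\chi^p,\chi^n)\ge0$ for any finite $\{\chi^p\}\subset L^2(\Omega)$ --- either by reproducing, in discrete form, the integration-by-parts manipulation performed for $\Phi(t;u)$ in Theorem~\ref{theorem2.1} (after rewriting $Q_n$ through $K'=-\beta$), or by a direct sign analysis of the explicit kernels \eqref{eq1.5} under (i)--(ii). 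The form cannot include $p=0$ on an equal footing (it appears only as an inner index, never as the outer index $n$), so the term $\sum_{n=1}^m 2\tau\,\widetilde{\kappa}_{n0}(\nabla u_{1h},\nabla\bar{\delta}_tU^n)$ (recall $\bar{\delta}_tU^0=u_{1h}$) is split off and treated like the forcing terms below, using $\sum_{n=1}^m|\widetilde{\kappa}_{n0}|\le C$ (since $\widetilde{\kappa}_{n0}\sim\tau K(t_n)$ and $\tau\sum_n|K(t_n)|\le C\int_0^\infty|K|<\infty$). Establishing this discrete positivity is the step I expect to be the main obstacle.

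For the right-hand side, $\sum_{n=1}^m 2\tau(f^n,\bar{\delta}_tU^n)\le 2\big(\tau\sum_{i=0}^{n}\|f^i\|\big)\max_{1\le j\le m+1}\|\delta_tU^{j}\|$ from $\|\bar{\delta}_tU^n\|\le\max(\|\delta_tU^{n+1}\|,\|\delta_tU^n\|)$, while $\sum_{n=1}^m 2\tau K(t_n)(\nabla U^0,\nabla\bar{\delta}_tU^n)=\sum_{n=1}^m K(t_n)(\nabla U^0,\nabla U^{n+1}-\nabla U^{n-1})$ is handled by summation by parts: it becomes boundary terms at $k=m+1,m,1,0$ (each bounded by $c_0$ times a product of gradients, since $\|K\|_\infty\le c_0$) plus $\sum_k\big(K(t_{k-1})-K(t_{k+1})\big)(\nabla U^0,\nabla U^k)$, where $\sum_k|K(t_{k-1})-K(t_{k+1})|\le2\int_0^\infty|\beta|\le2c_1$. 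With $\mathcal{E}_m:=\max_{0\le j\le m}\|U^j\|_A$ and the bounds $\|\delta_tU^{j}\|\le\|U^{j-1}\|_A$, $\|\nabla U^{j}\|\le\sqrt{2/\mu_0}\,\|U^{j-1}\|_A$, every one of these contributions takes the form $C(\|\nabla u_{0h}\|+\|\nabla u_{1h}\|+\tau\sum_i\|f^i\|)\,\mathcal{E}_m$. Since the same inequality holds with $\|U^k\|_A$ in place of $\|U^m\|_A$ for each $k\le m$, taking the maximum gives $\mathcal{E}_m^2\le\|U^0\|_A^2+C\,\mathcal{D}_m\,\mathcal{E}_m$ with $\mathcal{D}_m:=\|\nabla u_{0h}\|+\|\nabla u_{1h}\|+\tau\sum_{i=0}^{n}\|f^i\|$; solving this quadratic inequality in $\mathcal{E}_m$ yields $\|U^m\|_A\le\mathcal{E}_m\le\|U^0\|_A+C\mathcal{D}_m$, which is the assertion. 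Crucially no Grönwall inequality is used, so $C$ is independent of $T$.

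For uniqueness, note that with $U^0,\dots,U^n$ fixed the scheme \eqref{eq2.15} is \emph{linear} in $U^{n+1}$ (the argument of $G$ depends only on the known $U^n$), and the governing bilinear form $v\mapsto(\tau^{-2}+G_n/(2\tau))(v,\cdot)+(\mu_0/2+\widetilde{\kappa}_{nn}/(2\tau))(\nabla v,\nabla\cdot)$, with $G_n:=G(\mu_1\|U^n\|^2+\mu_2\|\nabla U^n\|^2)\ge g_0>0$, is symmetric and positive definite for $\tau$ small, since $\mu_0>0$ and $|\widetilde{\kappa}_{nn}|\le C\tau$ (equivalently, the stability estimate applied with zero data forces the homogeneous solution to vanish). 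Hence the square linear system at each time level is nonsingular, and \eqref{eq2.15} has a unique solution.
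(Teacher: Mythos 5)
Your proposal follows essentially the same route as the paper's proof: the same test function $\bar{\delta}_tU^n$, the same telescoping identities producing the semi-norm $\|\cdot\|_A$, the same splitting-off of the $p=0$ quadrature term combined with positivity of the remaining double sum, the same summation by parts for the $K(t_n)(\nabla U^0,\cdot)$ term, and the same maximum/quadratic-inequality argument in place of Gr\"{o}nwall so that $C$ is $T$-independent. The discrete positivity of the weights $\widetilde{\kappa}_{np}$ for the positive-type kernel $K$, which you correctly identify as the crux and leave to be established, is exactly the point the paper disposes of by citing McLean and Thom\'{e}e; otherwise the two arguments coincide.
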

\begin{proof} Choose $\psi=\bar{\delta}_tU^n$ in \eqref{eq2.15} to obtain
   \begin{multline}\label{eq6.1}
        (\delta_t^2U^n,\bar{\delta}_tU^n) + G\left(   \mu_1\|U^n\|^2+\mu_2\|\nabla U^n\|^2 \right) (\bar{\delta}_tU^n,\bar{\delta}_tU^n) 
         +  \mu_0(\nabla \widetilde{U}^n,\nabla \bar{\delta}_tU^n)\\  + \sum\limits_{p=0}^{n}\widetilde{\kappa}_{np}(\nabla \bar{\delta}_t U^p,\nabla \bar{\delta}_tU^n)
          = (f^n, \bar{\delta}_tU^n) + K(t_n)(\nabla U^0, \nabla\bar{\delta}_tU^n), \quad n\geq 1,
   \end{multline}
where, by direct calculations, 
   \begin{equation}\label{eq6.2}
   \begin{split}
        (\delta_t^2U^n,\bar{\delta}_tU^n) &= \frac{1}{2 \tau} \left( \|\delta_tU^{n+1}\|^2 -\|\delta_tU^{n}\|^2   \right), \\
         (\nabla \widetilde{U}^n,\nabla \bar{\delta}_tU^n) &= \frac{1}{4\tau} \left(\|\nabla U^{n+1}\|^2+\|\nabla U^{n}\|^2 \right) - \left(\|\nabla U^{n}\|^2+\|\nabla U^{n-1}\|^2 \right).
   \end{split}
   \end{equation}
We sum \eqref{eq6.1} multiplied by $2\tau$ from 1 to $m$ and apply the new norm $\|\cdot\|_A$ defined by \eqref{eq6.3}, the assumption ($\mathbf{A1}$) and \eqref{eq6.2} to get
   \begin{multline}\label{eq6.4}
        \|U^m\|_A^2  + 2g_0\tau \sum\limits_{n=1}^{m}\|\bar{\delta}_tU^n\|^2  + 2 \tau \sum\limits_{n=1}^{m}\sum\limits_{p=0}^{n}\widetilde{\kappa}_{np}(\nabla \bar{\delta}_tU^p,\nabla \bar{\delta}_tU^n)  \\
         \leq \|U^0\|_A^2 +  2 \tau \sum\limits_{n=1}^{m}K(t_n)(\nabla U^0, \nabla \bar{\delta}_tU^n) + 2 \tau \sum\limits_{n=1}^{m}(f^n, \bar{\delta}_tU^n).
   \end{multline}
Since
\begin{equation*}
    \begin{split}
        2 \tau \sum\limits_{n=1}^{m}K(t_n)\nabla \bar{\delta}_tU^n
        & =  \sum\limits_{n=1}^{m}K(t_n)\left[ (\nabla U^{n+1}+\nabla U^{n}) - (\nabla U^{n} + \nabla U^{n-1}) \right] \\
        & = K(t_m)\left(\nabla U^{m+1}+\nabla U^{m}\right) - K(t_1)(\nabla U^{1}+\nabla U^{0}) \\
        & \quad + \sum\limits_{n=1}^{m-1}\left[ K(t_n)-K(t_{n+1}) \right] (\nabla U^{n+1}+\nabla U^{n}),
    \end{split}
\end{equation*}
\[2 \tau \sum\limits_{n=1}^{m}K(t_n)(\nabla U^0, \nabla \bar{\delta}_tU^n) 
        \leq C\|\nabla U^0\|\left( \left\|U^0\right\|_A  + \left\|U^m\right\|_A \right)   
         + C\|\nabla U^0\|\sum\limits_{n=1}^{m-1} \int_{t_{n}}^{t_{n+1}} |\beta(s)|ds \left\|U^n\right\|_A.\]
Following \cite[51--52]{McLean}, it holds
\[
        \tau \sum\limits_{n=1}^{m}\sum\limits_{p=0}^{n}\widetilde{\kappa}_{np}(\nabla \bar{\delta}_tU^p,\nabla \bar{\delta}_tU^n) \geq \tau \sum\limits_{n=1}^{m} \widetilde{\kappa}_{n0} (\nabla u_{1h},\nabla \bar{\delta}_tU^n).
 \]
Furthermore, recall the definition of $\|\cdot\|_A$ in \eqref{eq6.3},  thus we have
\begin{align}
     \|\bar{\delta}_tU^n\| &\leq \frac{\|U^n\|_A+\|U^{n-1}\|_A}{2}, \nonumber \\
     \left| \tau (\nabla u_{1h},\nabla \bar{\delta}_tU^n) \right| &\leq \sqrt{\frac{2}{\mu_0}}  \frac{(\|U^n\|_A+\|U^{n-1}\|_A)}{2} \|\nabla u_{1h}\|. \label{eq6.7}
\end{align}
We substitute the above contribution   into \eqref{eq6.4} to get
   \begin{equation*}
   \begin{split}
        \|U^m\|_A^2  &\leq \|U^0\|_A^2  + 2\sqrt{\frac{2}{\mu_0}} \sum\limits_{n=1}^{m} \widetilde{\kappa}_{n0}  \frac{(\|U^n\|_A+\|U^{n-1}\|_A)}{2} \|\nabla u_{1h}\| \\
        & \quad + C\|\nabla U^0\|\left( \left\|U^0\right\|_A  + \left\|U^m\right\|_A \right)  + C\|\nabla U^0\|\sum\limits_{n=1}^{m-1} \int_{t_{n}}^{t_{n+1}} |\beta(s)|ds \left\|U^n\right\|_A \\
        & \quad + 2 \tau \sum\limits_{n=1}^{m}\|f^n\| \frac{\|U^n\|_A+\|U^{n-1}\|_A}{2}.
   \end{split}
   \end{equation*}
We choose a suitable $\ell$ such that $\|U^\ell\|_A = \max\limits_{0\leq n \leq m}\|U^n\|_A$, which, together with $\beta\in L_{1}(0,\infty)$, yields
   \begin{equation}\label{eq6.9}
   \begin{split}
         \|U^\ell\|_A   
         & \leq \|U^0\|_A  + C\left[ \sum\limits_{n=1}^{m} \widetilde{\kappa}_{n0}   \|\nabla u_{1h}\| + \tau \sum\limits_{n=1}^{m}\|f^n\|  + \|\nabla u_{0h}\|  \right].
   \end{split}
   \end{equation}
By \eqref{coeff} and the monotonicity of $s^{\alpha-1}$, we have for $m\geq 1$ and $\sigma \geq 1$
   \begin{equation}\label{eq6.10}
   \begin{split}
         \sum\limits_{n=1}^{m} \widetilde{\kappa}_{n0} & \leq \sum\limits_{n=1}^{m} \int_{t_{n-1}}^{t_n} K(t) dt = \int_{0}^{t_m} K(t) dt \leq \int_{0}^{t_m} \int_t^{\infty }|\beta(s)| ds dt \\
        & \leq  \int_0^{\infty}  \int_t^{\infty}\frac{s^{\alpha-1}}{\Gamma(\alpha)} e^{-\sigma s}dsdt \leq    \int_0^{\infty} \frac{t^{\alpha-1}}{\Gamma(\alpha)} \int_t^{\infty}e^{-\sigma s}dsdt  
        \\ &= \int_0^{\infty} \frac{t^{\alpha-1}}{\Gamma(\alpha)} \frac{e^{-\sigma t}}{\sigma}dt 
     \leq \int_0^{\infty} \frac{e^{-t}t^{\alpha-1}}{\Gamma(\alpha)}dt = 1.
   \end{split}
   \end{equation}
Combine \eqref{eq6.9} and \eqref{eq6.10} to complete the proof.

\end{proof}

\subsection{Error estimate}
We  derive error estimate of the fully discrete Galerkin scheme.

\begin{theorem}\label{theorem2.5}
Suppose that $\beta(t)$ is given by \eqref{eq1.5} and the assumptions $(\mathbf{A1})$-$(\mathbf{A2})$ hold with  $u_0\in H^2(\Omega)$, $u_1\in H^1(\Omega)$, and $f,f'\in L^1(0,T; L^2(\Omega))$ for $0<T<\infty$. Then the following error estimate holds for any  $1\le m\leq T/\tau$:
\begin{align}
        \left\|\nabla \left(U^{m} - u(t_{m})\right) \right\| &\leq Ch \left( \|u'\|_{L^{\infty}(H^1) } +  \|u_1\|_1 +\tau \|u_0\|_2 \right) \nonumber \\
        & + Ch\left(\|u\|_{L^{\infty}(H^2)}+\|u'\|_{L^{\infty}(H^1)}+\|u''\|_{L^{\infty}(H^1)}\right)  \nonumber \\
        & + C\tau \int_0^{2\tau} \|u'''(t)\|dt + C\tau^2 \int_{\tau}^{t_{m+1}} \|u^{(4)}(t)\|dt \nonumber \\
        & +  C\tau^2 \int_0^{t_{m+1} }\|u'''(t)\|dt + C\tau^2 \int_0^{t_{m+1} }\|\Delta u''(t)\|dt \nonumber \\
        & + C \tau \int_0^{\tau} \|\Delta u''(t)\|dt + C\tau^2  \int_{\tau}^{t_{m+1}} \|\Delta u'''(t)\|dt. \label{eq2.20}
    \end{align}
\end{theorem}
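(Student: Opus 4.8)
The plan is to follow the same energy-argument template used in Theorems~\ref{theorem2.2} and~\ref{theorem2.3}, but now at the fully discrete level, combining the discrete stability estimate of Theorem~\ref{theorem2.4} with the truncation-error bounds collected in \eqref{qq1}--\eqref{qq5}. First I would introduce the error splitting $U^n - u(t_n) = (U^n - w_h(t_n)) - (u(t_n) - w_h(t_n)) =: \zeta^n - \eta^n$, where $w_h$ is the Ritz projection of \eqref{eq2.9} and $\eta^n = \eta(t_n)$ obeys the projection bounds \eqref{eq2.10}. Subtracting the consistency equation \eqref{qq6} from the scheme \eqref{eq2.15} and using the Ritz orthogonality \eqref{eq2.9} to kill the $(\nabla\eta,\nabla\psi_h)$ and memory-$\nabla\eta$ contributions, one gets an error equation for $\zeta^n$ whose right-hand side contains (a) the projection terms $\delta_t^2\eta^n$ and the pieces coming from $\bar\delta_t\eta^n$, (b) the consistency residual $\mathcal{R}^n$ (and $\mathcal{R}^0$ from the starting step \eqref{ModelE}), and (c) the difference of the nonlinear-nonlocal damping terms $G(\mu_1\|U^n\|^2+\mu_2\|\nabla U^n\|^2)\bar\delta_t U^n - G(\mu_1\|w_h(t_n)\|^2+\mu_2\|\nabla w_h(t_n)\|^2)\bar\delta_t w_h(t_n)$, which I would rewrite exactly as in \eqref{eq5.2}--\eqref{eq5.4} via the auxiliary quantity $\varphi_\vartheta$, now in its discrete incarnation, producing the analogues of $J_1,J_2,J_3$.

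Next I would test the $\zeta^n$-error equation with $\psi_h = \bar\delta_t\zeta^n$, exactly mirroring the choice $\psi = \bar\delta_t U^n$ in the proof of Theorem~\ref{theorem2.4}. The algebraic identities \eqref{eq6.2} turn the leading terms into the telescoping quantity $\|\zeta^n\|_A^2$ with the semi-norm of \eqref{eq6.3}, the $g_0\|\bar\delta_t\zeta^n\|^2$ term is retained from assumption $(\mathbf{A1})$, and the memory sum $\sum_p\widetilde\kappa_{np}(\nabla\bar\delta_t\zeta^p,\nabla\bar\delta_t\zeta^n)$ is handled by the positive-type discrete quadratic-form inequality of \cite[51--52]{McLean} used in Theorem~\ref{theorem2.4} (here $\zeta^0=0$, so the boundary term $\widetilde\kappa_{n0}(\nabla\zeta^0,\cdot)$ vanishes, which is a simplification relative to the stability proof). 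Summing over $n$ from $1$ to $m$ against $2\tau$, absorbing the $g_0\|\bar\delta_t\zeta^n\|^2$ to dominate the $J_2,J_3$ cross terms just as in \eqref{eq5.9}--\eqref{eq5.14}, and using the discrete regularity of $U^n$ (Theorem~\ref{theorem2.4}) together with the bound $\|\nabla w_h'\|\le\|\nabla u'\|$ and \eqref{eq2.5n} to control $\|\bar\delta_t w_h(t_n)\|$ exactly as in \eqref{eq5.7}--\eqref{eq5.8}, yields an inequality of the form $\|\zeta^m\|_A^2 \le C\tau\sum_{n=1}^m(\cdots)\|\zeta^n\|_A + C\tau\sum_{n}\|\zeta^n\|_A^2$ plus the accumulated data terms. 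Picking $\ell$ with $\|\zeta^\ell\|_A=\max_n\|\zeta^n\|_A$ and applying a discrete Gr\"onwall argument (over the finite window $m\le T/\tau$) gives $\|\zeta^m\|_A$, hence $\|\nabla\zeta^m\|$, in terms of those data terms.

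It then remains to bound the data terms. The projection contributions give the $Ch(\|u\|_{L^\infty(H^2)}+\|u'\|_{L^\infty(H^1)}+\|u''\|_{L^\infty(H^1)})$ block via \eqref{eq2.10}; the initial-data mismatches $u_{jh}-u_j$ for $j=0,1,2$ and the specific form of $U^1$ in \eqref{eq2.16} account for the $Ch(\|u_1\|_1+\tau\|u_0\|_2)$ term and feed into $\|\zeta^1\|_A$. The consistency residuals are where most of the work lies: Taylor expansion of $R_1^n,R_3^n$ gives the $\tau^2\int\|u^{(4)}\|$ and $\tau^2\int\|\Delta u''\|$ contributions, $R_2^n$ contributes $\tau^2\int\|u'''\|$ (using $q$ bounded by $g_1$ via $(\mathbf{A2})$ and the a priori bound), $\mathcal{R}^0$ in \eqref{qq5} gives the $\tau\int_0^\tau\|u'''\|$ and, after summation, $\tau\int_0^{2\tau}\|u'''\|$ terms, and the quadrature error $R_4^n$ of \eqref{qq4} splits into the interpolation error of $Q_n$ against $K(t_n-\cdot)\Delta u'$ — bounded using $K\in L^1$ and the smoothness of $\Delta u'$, contributing $\tau^2\int\|\Delta u'''\|$ plus a $\tau\int_0^\tau\|\Delta u''\|$ boundary-layer term near $s=0$ where $K$ may be only weakly regular — and the $\bar\delta_t u^p - u'(t_p)$ difference, again $O(\tau^2)$ in $\Delta u'''$, weighted by $\sum_n\widetilde\kappa_{np}$ which is $O(1)$ by the geometric-series estimate \eqref{eq6.10}. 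Finally, $\|\nabla(U^m-u(t_m))\|\le\|\nabla\zeta^m\|+\|\nabla\eta^m\|$ and collecting all the pieces produces exactly \eqref{eq2.20}.

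The main obstacle I anticipate is the treatment of the nonlinear-nonlocal damping difference at the fully discrete level: the clean telescoping provided by the semi-norm $\|\cdot\|_A$ and identities \eqref{eq6.2} must coexist with the $\varphi_\vartheta$-integral representation of $B^n$, and one must verify that the three discrete analogues of $J_1,J_2,J_3$ can still be absorbed by the single available dissipation term $2g_0\tau\sum\|\bar\delta_t\zeta^n\|^2$ — in particular that the cross terms involving $(\nabla\bar\delta_t w_h(t_n),\nabla\bar\delta_t\zeta^n)$ do not break the discrete Gr\"onwall structure. A secondary technical point is the careful splitting of $R_4^n$ so that the near-$t=0$ behaviour of $K$ (which, unlike $\beta$, is bounded but whose derivatives blow up in case (ii) of \eqref{eq1.5}) only costs a first-order boundary term $C\tau\int_0^\tau\|\Delta u''\|$ rather than degrading the global $O(\tau^2)$ rate.
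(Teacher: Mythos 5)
Your proposal is correct and follows essentially the same route as the paper's proof: the Ritz-projection splitting $U^n-u(t_n)=\xi^n-\eta^n$, testing the error equation with $\bar\delta_t\xi^n$, the $\varphi_\vartheta$-integral representation of the nonlinear-nonlocal damping difference combined with the semi-norm $\|\cdot\|_A$ and the positive-type quadrature inequality of McLean--Thom\'ee, the maximal-index selection with discrete Gr\"onwall, and the Taylor-expansion accounting of $R_1^n$--$R_4^n$, $\mathcal{R}^0$ and $\|\xi^0\|_A$. The concern you raise about absorbing the damping cross terms is resolved exactly as you anticipate, via the uniform bounds $\|\bar\delta_t U^n\|\le C$ (Theorem \ref{theorem2.4}) and $\|\bar\delta_t w_h(t_n)\|\le C$ (from \eqref{eq5.7}), after which only a fraction $\tfrac{g_0}{8}\|\bar\delta_t\xi^n\|^2$ of the dissipation is needed.
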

\begin{proof} We split $u(t_n)-U^n=[u(t_n)-w_h(t_n)]-[U^n-w_h(t_n)]:=\eta^n-\xi^n$ for $n\geq 1$ where $w_h(t)$ is the elliptic projection of $u(t_n)$ in \eqref{eq2.9}, and it suffices to bound $\|\xi^n\|$. We subtract \eqref{qq6} from \eqref{eq2.15} to get
   \begin{equation*}
   \begin{split}
        (\delta_t^2\xi^n,\psi) &+ \left[G_0\left(   U^n, \nabla U^n\right) (\bar{\delta}_tU^n,\psi) - G_0\left( w_h(t_n),\nabla w_h(t_n)\right) (\bar{\delta}_tw_h(t_n),\psi)  \right]  \\
        &+ \mu_0 (\nabla \widetilde{\xi}_1^n,\nabla \psi)  + \sum\limits_{p=1}^{n}\widetilde{\kappa}_{np}(\nabla \bar{\delta}_t\xi^p,\nabla \psi) \\
        & = (\delta_t^2\eta^n,\psi)   + \left(G_0\left(u^n,\nabla u^n\right) \bar{\delta}_tu^n - G_0\left( w_h(t_n),\nabla w_h(t_n)\right) \bar{\delta}_tw_h(t_n), \psi\right) \\
        & + (  G_0\left(u^n,\nabla u^n\right)(u'(t_n) - \bar{\delta}_tu^n)  , \psi) + \sum\limits_{j=1}^{4} ( R_j^n, \psi), \quad n\geq 1,
   \end{split}
   \end{equation*}
for $\psi\in \mathcal{S}_h$. Choosing $\psi=\bar{\delta}_t\xi^n$ gives
   \begin{equation}\label{eq7.1}
   \begin{split}
(\delta_t^2\xi^n,\bar{\delta}_t\xi^n) &+ \left[G_0\left(   U^n, \nabla U^n\right) (\bar{\delta}_tU^n,\bar{\delta}_t\xi^n) - G_0\left( w_h(t_n),\nabla w_h(t_n)\right) (\bar{\delta}_tw_h(t_n),\bar{\delta}_t\xi^n)  \right]  \\
        &+ \mu_0 (\nabla \widetilde{\xi}_1^n,\nabla \bar{\delta}_t\xi^n)  + \sum\limits_{p=1}^{n}\widetilde{\kappa}_{np}(\nabla \bar{\delta}_t\xi^p,\nabla \bar{\delta}_t\xi^n) \\
        & = (\delta_t^2\eta^n,\bar{\delta}_t\xi^n)   + \left(G_0\left(u^n,\nabla u^n\right) \bar{\delta}_tu^n - G_0\left( w_h(t_n),\nabla w_h(t_n)\right) \bar{\delta}_tw_h(t_n), \bar{\delta}_t\xi^n \right) \\
        & + (  G_0\left(u^n,\nabla u^n\right)(u'(t_n) - \bar{\delta}_tu^n), \bar{\delta}_t\xi^n) + \sum\limits_{j=1}^{4} ( R_j^n, \bar{\delta}_t\xi^n ), \quad n\geq 2,
   \end{split}
   \end{equation}
We first find a lower bound of  the second term of the left-hand side of \eqref{eq7.1}. By the Newton-Leibniz formula, we rewrite this term as
\begin{equation}\label{eq7.2}
   \begin{split}
       B_0(n; \tau):&=\left( G_0(U^n, \nabla U^n)\bar{\delta}_tU^n - G_0(w_h(t_n), \nabla w_h(t_n)) \bar{\delta}_tw_h(t_n), \bar{\delta}_t\xi^n \right) \\
       & = \left(\int_0^1 \frac{d}{d\vartheta} \left[G(\varphi_{\vartheta}(t_n))(\vartheta \bar{\delta}_t\xi^n + \bar{\delta}_tu_{h}^{*}(t_n) )\right]  d\vartheta, \bar{\delta}_t\xi^n \right) \\
       & = \int_0^1 G'(\varphi_{\vartheta}(t_n)) \frac{d \varphi_{\vartheta}(t_n)}{d \vartheta} \left[ \vartheta\|\bar{\delta}_t\xi^n\|^2 + \left(\bar{\delta}_tw_h(t_n), \bar{\delta}_t\xi^n\right)\right] d\vartheta \\
       &+ \int_0^1 G(\varphi_{\vartheta}(t_n)) d\vartheta \|\bar{\delta}_t\xi^n\|^2,
   \end{split}
   \end{equation}
where $\varphi_{\vartheta}(t)$ is given by \eqref{qqq1}
and $$\frac{d \varphi_{\vartheta}(t_n)}{d \vartheta}:=2\vartheta \left[\mu_1\| \xi^n\|^2+\mu_2\|\nabla\xi^n\|^2 \right] + 2\left[ \mu_1(w_h(t_n), \xi^n) +  \mu_2(\nabla w_h(t_n), \nabla\xi^n) \right].$$
 We then use $ \int_0^1 G(\varphi_{\vartheta}(t_n)) d\vartheta \|\bar{\delta}_t\xi^n\|^2 \geq g_0 \|\bar{\delta}_t\xi^n\|^2$ to get
\begin{equation*}
   \begin{split}
      B_0(n; \tau) \geq  2\sum\limits_{q=1}^4 \hat{J}_q(n; \vartheta, \tau) + g_0 \|\bar{\delta}_t\xi^n\|^2,  \quad t>0,
   \end{split}
   \end{equation*}
where 
\begin{equation}\label{eq7.4}
   \begin{split}
      & \hat{J}_1(n; \vartheta, \tau) = \int_0^1 G'_{\vartheta n}  \left(\mu_1\| \xi^n\|^2+\mu_2\|\nabla\xi^n\|^2 \right) [\vartheta^2\|\bar{\delta}_t\xi^n\|^2+\vartheta(\bar{\delta}_tw_h(t_n), \bar{\delta}_t\xi^n)] d\vartheta, \\
      & \hat{J}_2(n; \vartheta, \tau) = \int_0^1 G'_{\vartheta n}\Big( \mu_1(w_h(t_n), \xi^n) +  \mu_2(\nabla w_h(t_n), \xi^n)\Big)[\vartheta \|\bar{\delta}_t\xi^n\|^2 +\bar{\delta}_tw_h(t_n), \bar{\delta}_t\xi^n)] d\vartheta,
   \end{split}
   \end{equation}
with $G'_{\vartheta n}=G'(\varphi_{\vartheta}(t_n))$. Below, we shall analyze the terms $\hat{J}_1$ and $\hat{J}_2$ in \eqref{eq7.4}. First, we apply Theorem \ref{theorem2.4} to obtain $\|\bar{\delta}_tU^n\|\leq C$, and  \eqref{eq5.7} gives
\begin{equation}\label{eq7.5}
   \begin{split}
       \|\bar{\delta}_t w_h(t_n)\| \leq \Big\| \frac{1}{2\tau} \int_{t_{n-1}}^{t_{n+1}} w_h'(t)dt \Big\| \leq \frac{C}{\tau}  \int_{t_{n-1}}^{t_{n+1}}\|w_h'(t)\|dt \leq C.
   \end{split}
   \end{equation}
Then we employ the triangle inequality to get
\begin{equation}\label{eq7.6}
   \begin{split}
       \|\bar{\delta}_t\xi^n\|\leq \|\bar{\delta}_tU^n\|+\|\bar{\delta}_t w_h(t_n)\| \leq C, \quad  n\geq 2. 
   \end{split}
   \end{equation}
Based on \eqref{eq7.5}--\eqref{eq7.6} and the assumption ($\mathbf{A2}$), we have for $n\geq 2$
\[\|\hat{J}_1(n; \vartheta, \tau)\| \leq  C\left(\| \xi^n\|^2+\|\nabla \xi^n\|^2 \right) \leq C\|\nabla \xi^n\|^2.\]
Then we use \eqref{eq7.5}--\eqref{eq7.6}, Poincar\'{e} inequality and Young inequality to get
\[\|\hat{J}_2(n; \vartheta, \tau)\| \leq C\left(\| \nabla \xi^n\|+\|\xi^n\| \right) \|\bar{\delta}_t\xi^n\| \leq C\| \nabla \xi^n\| \|\bar{\delta}_t\xi^n\|  \leq \frac{g_0}{8}\|\bar{\delta}_t\xi^n\|^2 + C\| \nabla \xi^n\|^2.\]
We further estimate the second term of the right-hand side of \eqref{eq7.1}. We apply \eqref{eq6.3} to get
\begin{equation}\label{eq7.11}
   \begin{split}
  &\left(G_0\left(u^n,\nabla u^n\right) \bar{\delta}_tu^n - G_0\left( w_h(t_n),\nabla w_h(t_n)\right) \bar{\delta}_tw_h(t_n), \bar{\delta}_t\xi^n \right)\\
  &\leq  \left| \left( G_0\left(u^n,\nabla u^n\right) [\bar{\delta}_tu^n -  \bar{\delta}_tw_h(t_n)], \bar{\delta}_t\xi^n\right) \right| \\
        & + \left| \left( [G_0\left(u^n,\nabla u^n\right) - G_0\left( w_h(t_n),\nabla w_h(t_n)\right)] \bar{\delta}_tw_h(t_n), \bar{\delta}_t\xi^n\right) \right| \\
        & \leq \mathcal{L}\left[ \mu_1(\|u^n\|^2-\|w_h(t_n)\|^2)+\mu_2(\|\nabla u^n\|^2-\|\nabla w_h(t_n)\|^2)\right]\\
        & \qquad  \times \|\bar{\delta}_tw_h(t_n)\|\|\bar{\delta}_t\xi^n\| + g_1 \left\|\bar{\delta}_t\eta^n\right\|\|\bar{\delta}_t\xi^n\| 
         \leq  C\left(\|\nabla \eta^n\| + \left\|\bar{\delta}_t\eta^n\right\| \right) \frac{\|\xi^n\|_A+\|\xi^{n-1}\|_A}{2}.
    \end{split}
   \end{equation}  
We invoke \eqref{eq7.2}--\eqref{eq7.11} in \eqref{eq7.1}, sum the resulting equation multiplied by $2\tau$ for $n$ from 1 to $m$ and use \eqref{eq6.2}--\eqref{eq6.7} to get
\begin{multline*}
       \|\xi^m\|^2_A  \leq  \|\xi^0\|^2_A
           + C \tau \sum\limits_{n=1}^{m}  \Big(\|\xi^n\|_A^2 +  \|\delta_t^2 \eta^n\| \frac{\|\xi^n\|_A+\|\xi^{n-1}\|_A}{2} 
        \\
         +  \left\|u'(t_n)-\bar{\delta}_tu^n\right\| \frac{\|\xi^n\|_A+\|\xi^{n-1}\|_A}{2} 
         +  \sum\limits_{j=1}^{4}\|R_j^n\| \frac{\|\xi^n\|_A+\|\xi^{n-1}\|_A}{2}  \\
        +   \left(\|\nabla \eta^n\| + \left\|\bar{\delta}_t\eta^n\right\| \right) \frac{\|\xi^n\|_A+\|\xi^{n-1}\|_A}{2}\Big).
   \end{multline*} 
Choose a suitable $\ell$ such that $\|\xi^\ell \|_A = \max\limits_{0\leq n \leq m}\left\|\xi^n \right\|_A$, we have for $j\le m,$ 
\begin{multline}\label{eq7.13}
       \|\xi^\ell\|_A  \leq  C \tau \sum\limits_{n=1}^\ell  \|\xi^n\|_A + \left\|\xi^0 \right\|_A
          + C \tau \sum\limits_{n=1}^\ell \|\delta_t^2 \eta^n\| 
        \\
         + C \tau \sum\limits_{n=1}^\ell\Big( \left\|u'(t_n)-\bar{\delta}_tu^n\right\|  +  \sum\limits_{j=1}^{4}\|R_j^n\|    +   \left(\|\nabla \eta^n\| +\left\|\bar{\delta}_t\eta^n\right\| \right)\Big).
   \end{multline}
Next, we will estimate the terms of the right-hand side of \eqref{eq7.13}. First, we get
\begin{align}
     & \tau \sum\limits_{n=1}^{m}  \left\|\delta_t^{2}\eta^n \right\|\leq
      C \tau \sum\limits_{n=1}^{m} \|\eta''_1(\kappa_n)\|\leq Ch \|u''\|_{L^{\infty}(H^1)}, \;\; \kappa_n\in [t_{n-1}, t_{n+1}], \label{eq7.14} \\
      &  \tau \sum\limits_{n=1}^{m} \left\|u'(t_n)-\bar{\delta}_tu^n\right\|  \leq \frac{\tau^2}{2}\int_0^{t_{m+1}}\|u'''(s)\|ds,\nonumber\\
      & \tau \sum\limits_{n=1}^{m} \left(\|\nabla \eta^n\| +\left\|\bar{\delta}_t\eta^n\right\| \right) \leq Ch\left(\|u\|_{L^{\infty}(H^2)}+\|u'\|_{L^{\infty}(H^1)}\right). \nonumber
\end{align}
Then we discuss the terms $R_j^n$ with $1\leq j\leq 4$. First, we bound the term on $R_1^n$ in \eqref{qq1}. We apply the Taylor expansion with integral remainder to arrive at
\begin{equation*}
  \begin{split}
    &  u''(t_n) - \delta_t^{2}u^n =  \frac{-1}{6\tau^2} \left[ \int_{t_n}^{t_{n+1}}(t_{n+1}-t)^3u^{(4)}(t)dt +   \int_{t_{n-1}}^{t_{n}}(t-t_{n-1})^3u^{(4)}(t)dt\right], \\
& n\geq 2, \quad  u''(t_1) - \delta_t^{2}u^1 =  \frac{-1}{2\tau^2} \left[ \int_{t_1}^{t_{2}}(t_{2}-t)^2 u'''(t)dt +   \int_{0}^{t_{1}}t^2u'''(t)dt\right],
  \end{split}
   \end{equation*}
which gives
\begin{equation*}
   \begin{split}
       \tau \sum\limits_{n=1}^{m} \|R_1^n\| \leq   \tau \int_0^{2\tau} \|u'''(t)\|dt + \tau^2 \int_{\tau}^{t_{m+1}} \|u^{(4)}(t)\|dt.
   \end{split}
   \end{equation*}
To estimate $R_2^n$ in \eqref{qq1}, we use the identity 
\begin{equation*}
   \begin{split}
        u'(t_n)-\Bar{\delta}_tu^n   &=  \frac{-1}{4\tau} \left[ \int_{t_n}^{t_{n+1}}(t_{n+1}-t)^2u'''(t)dt +   \int_{t_{n-1}}^{t_{n}}(t-t_{n-1})^2u'''(t)dt\right],
   \end{split}
   \end{equation*}
    and obtain 
\begin{equation*}
   \begin{split}
      \tau \sum\limits_{n=1}^{m}  \left\|R_2^n \right\|\leq
       \frac{g_1}{2} \tau^2 \int_0^{t_{m+1} }\|u'''(t)\|dt.
   \end{split}
   \end{equation*}
   However, to estimate  $R_3^n$ in \eqref{qq1}, we use
   \begin{equation*}
   \begin{split}
      \Delta u(t_n) - \Delta\widetilde{u}^n & =  \frac{-1}{2} \Bigg[ \int_{t_{n}}^{t_{n+1}} (t_{n+1}-t) \Delta u''(t)dt + \int_{t_{n-1}}^{t_{n}} (t-t_{n-1}) \Delta u''(t) dt \Bigg],
   \end{split}
   \end{equation*} 
   and get 
\begin{equation*}
   \begin{split}
       \tau \sum\limits_{n=1}^{m} \|R_3^n\|  \leq \tau^2 \int_0^{t_{m+1} }\|\Delta u''(t)\|dt.
   \end{split}
   \end{equation*}
To estimate  $R_4^n$ in \eqref{qq4},
we follow the procedure in \cite[52]{McLean} with $\hat{\mu}_j = \int_{t_j}^{t_{j+1}}K(s)ds$ to obtain 
\begin{equation*}
   \begin{split}
      \tau \sum\limits_{n=1}^{m}  \|R_4^n\|
       & \leq
       \hat{\mu}_0 \tau \int_0^{\tau} \|\Delta u''(t)\|dt + \tau^2 \sum\limits_{p=2}^{m}\left(\sum\limits_{n=p}^{m} \hat{\mu}_{n-p} \right) \int_{t_{p-1}}^{t_p} \|\Delta u'''(t)\|dt  \\
       & + \tau \sum\limits_{p=1}^{m} \Delta\left(u'(t_p)-\bar{\delta}_tu^p \right) \left( \sum\limits_{n=p}^{m} \widetilde{\kappa}_{np} \right).
   \end{split}
   \end{equation*}
Thus, since $\sum\limits_{n=p}^{m} \hat{\mu}_{n-p}  \leq \int_0^{t_m}K(t)dt \leq C$ and since $\sum\limits_{n=p}^{m} \widetilde{\kappa}_{np} \leq C\tau   \sum\limits_{n=p}^{m} \leq C$ (which is due to  \eqref{coeff}),   
\begin{equation*}
   \begin{split}
      \tau \sum\limits_{n=1}^{m}  \|R_4^n\|
       & \leq
       C \tau \int_0^{\tau} \|\Delta u''(t)\|dt + C\tau^2  \int_{\tau}^{t_{m+1}} \|\Delta u'''(t)\|dt. 
   \end{split}
   \end{equation*}
Finally, to estimate the first term of the right-hand side of \eqref{eq7.13}, i.e., $\|\xi^0\|_A$. we subtract \eqref{ModelE} from \eqref{eq2.16} and use \eqref{qq5} to get
\begin{equation*}
    \begin{split}
        & \delta_t \xi^1  = \delta_t \eta^1 + (u_{1h}-u_1) + \frac{\tau}{2}(u_{2h}-u_2) - \mathcal{R}^0, \\
        & \nabla  \xi^1 = \tau \left[ \delta_t \nabla\eta^1 + (\nabla u_{1h}-\nabla u_1) + \frac{\tau}{2}(\nabla u_{2h}-\nabla u_2) - \nabla\mathcal{R}^0 \right],
    \end{split}
\end{equation*}
which, together with \eqref{eq6.3}, leads to
\begin{equation}\label{eq7.22}
   \begin{split}
       \left\|\xi^0 \right\|_A & \leq Ch \left( \|u'\|_{L^{\infty}(H^1) } +  \|u_1\|_1 +\tau \|u_0\|_2 \right) +  C\tau \int_0^{\tau} \| u'''(t)\|dt.
   \end{split} 
   \end{equation}
We invoke \eqref{eq7.14}--\eqref{eq7.22} in \eqref{eq7.13} and use $\|\nabla \eta(t_m)\|\leq Ch\max\limits_{0\leq t \leq t_m}\|u(t)\|_2$ and the discrete Gr\"{o}nwall's lemma to obtain \eqref{eq2.20}. This completes the proof.
\end{proof}

\subsection{Discussion on temporal accuracy}
Since the solution of \eqref{eq1.1}-\eqref{eq1.3} may exhibit initial singularity and is in general smooth away from the initial time under certain regularity assumptions on the given data, we focus the attention near the initial time and thus consider  \eqref{eq1.1}-\eqref{eq1.3} with $q(t)= q(0):=q_0$ for simplicity. Hence, to consider the temporal solution regularity, one could apply the eigenpairs of $-\Delta$ to decompose the linear version of \eqref{eq1.1}-\eqref{eq1.3} as ordinary differential equations of the following form for some $\lambda>0$: for $t>0$, 
\begin{equation}\label{eq7.36}
        u'' + q_0 u'   +\lambda u -\lambda \int_0^{t}\beta(t-s) u(s)ds = f(t), ~~{\rm with}~~ u(0) = u_0~{\rm and}~u'(0) = u_1.
   \end{equation}
Since 
\[ u'(t)= u_1 +  \int_0^t u''(s) ds~{\rm and}~  u(t)= u_0 + tu_1 + \int_0^t (t-s)u''(s) ds, \quad t>0,\]
\begin{equation*}
   \begin{split}
        u''(t)  =   f(t) & - q_0 \left[u_1 +  \int_0^t u''(s) ds\right] - \lambda \left[u_0 + tu_1 + \int_0^t (t-s)u''(s) ds\right] \\
        & + \lambda \int_0^{t}\beta(t-s) \left[u_0 + su_1 + \int_0^s (s-z)u''(z) dz \right]ds, \quad t> 0.
   \end{split}
   \end{equation*}
Thus, for the case (i) in (\ref{eq1.5}), we have the asymptotic behaviour of $u''$ and $u'''$ as follows 
\begin{align*}
        u''(t)& \simeq f(t) - q_0 u_1 - \lambda u_0 + \frac{\lambda u_0}{\Gamma(1+\alpha)}t^{\alpha} + O(t^{\alpha}), \quad t\rightarrow 0^{+},\\
        u'''(t)& \simeq f'(t) - q_0\, u''(0) - \lambda u_1 + \frac{\lambda u_0}{\Gamma(\alpha)}t^{\alpha-1} + O(t^{\alpha-1}), \quad t\rightarrow 0^{+},
   \end{align*}
which implies $|u''(t)| \leq C$ and $|u'''(t)| \leq C(1 + t^{\alpha-1})$. Similarly we could get $|u^{(4)}(t)| \leq C(1 + t^{\alpha-2})$. For the case (ii) in (\ref{eq1.5}), where there is no singularity in the kernel, the solutions could be smooth under smooth data. Thus, it is reasonable to assume that the solutions to  \eqref{eq1.1}-\eqref{eq1.3} satisfy
\begin{equation}\label{regul}
   \begin{split}
       &t^{\alpha-1}\|\Delta u''(t)\| +\|\Delta u'''(t)\| \leq C(1+t^{\alpha-1}), \quad \|u^{(4)}(t)\| \leq C(1+t^{\alpha-2})\text{ for case } (i), \\
       & \|\Delta u''(t)\| +  \|\Delta u'''(t)\| + \|u^{(4)}(t)\| \leq C\text{ for case } (ii). 
   \end{split}
   \end{equation}
Then the last three terms on the right-hand side of \eqref{eq2.20} could be further bounded as 
\begin{equation*}
   \begin{split}
       & \int_0^{2\tau} \|u'''(t)\|dt + \tau \int_{\tau}^{t_{m+1}} \|u^{(4)}(t)\|dt  \leq C\tau^\alpha, \\
       & \int_0^{t_{m+1} }\Big(\|u'''(t)\| + \|\Delta u''(t)\|\Big)dt \leq C, \\
       &  \int_0^{\tau} \|\Delta u''(t)\|dt + C\tau  \int_{\tau}^{t_{m+1}} \|\Delta u'''(t)\|dt \leq C\tau,
   \end{split}
   \end{equation*}
which results in $O(\tau^{1+\alpha})$ temporal accuracy with $\alpha=1$ or $\alpha=\frac{1}{2}$.

\begin{remark}\label{rem2.1}
Motivated by the above analysis, Theorem \ref{theorem2.5} indicates the $O(h+\tau^{1+\alpha})$ accuracy of the $H^1$-norm error with $\alpha=1$ or $\alpha=\frac{1}{2}$. 
\end{remark}

\section{Numerical experiment} \label{sec5}

\subsection{One-dimensional case}
 Let $\Omega=(0,1)$, $u_0(x)=\sin(\pi x)$,  $u_1(x)=\sin(2\pi x)$ and $f(x,t)=t^{\alpha}e^{-\sigma t} \cos(\gamma t) \sin(\pi x)$. Let $G(z)=\sqrt{1+z}$ with $\mu_1=\mu_2=1$ in \eqref{eq1.2} and $h=\frac{1}{M}$ for some $M>0$. Since the exact solution is unknown, to illustrate numerically the achieved $H^1$-norm convergence rates in Theorem \ref{theorem2.5}, we define 
\begin{align*}
    {E_t(M,N)}& =\sqrt{h\sum\limits_{j=1}^{M-1}\left|V_j^{N+1}-V_j^{2N+1} \right|^2}, ~ {E_s(M,N)}=\sqrt{h\sum\limits_{j=1}^{M-1}\left|V_j^{N+1}-V_{2j}^{N+1} \right|^2},~V_j^n  = \frac{U_j^n-U_{j-1}^n}{h}\,.
\end{align*}
Here $U_j^n$ approximates $u(x_j,t_n)$ and hence,  $V_j^n$  approximates $\nabla u (x_j,t_n)$,
$E_t$ represents the difference between gradients of numerical solutions at time  $T$ computed under the time step sizes $\tau=T/N$ and $\tau=T/(2N)$, and $E_s$ could be interpreted similarly.
The convergence rates in time and space are accordingly defined as
\begin{align*}
    CR_t  =\log_{2}\left(\frac{E_t(M,N)}{E_t(M,2N)}\right), \quad CR_s=\log_{2}\left(\frac{E_s(M,N)}{E_s(2M,N)}\right).
\end{align*}

We  fix $T=1$ to evaluate the $H^1$ errors and convergence rates of the fully-discrete Galerkin scheme in Table \ref{tab1}, which demonstrate the $O(\tau^{1+\alpha} + h)$ accuracy as predicted in Theorem \ref{theorem2.5}. 


\begin{table}[H]
    \center \footnotesize
    \caption{ $H^1$ errors and time-space convergence rates under $T=1$.} \label{tab1}
    \vskip 2mm
    \begin{tabular}{cccccccccccc}
      \toprule
    & & & \multicolumn{2}{c}{$N=32$, $\gamma=3\sqrt{3}$, $\sigma=3$} & &\multicolumn{2}{c}{$M=32$, $\gamma=1.0$, $\sigma=2.0$}\\
   \cmidrule{4-5}  \cmidrule{7-8}
      $\alpha=0.5$ &$M$ & & {$E_s$} & {$CR_s$}  & $N$ & {$E_t$} & {$CR_t$} \\
      \midrule
       & $32$    & &  $2.4232 \times 10^{-2}$  &    *         & $128$    & $2.3145 \times 10^{-3}$  &    *    \\
       & $64$    & &  $1.2798 \times 10^{-2}$  &    0.92      & $256$    & $7.2638 \times 10^{-4}$  & 1.67   \\
       & $256$   & &  $6.5733 \times 10^{-3}$  &    0.96      & $512$    & $2.3469 \times 10^{-4}$  & 1.63    \\
       & $512$   & &  $3.3306 \times 10^{-3}$  &    0.98      & $1024$   & $7.7536 \times 10^{-5}$  & 1.60    \\
   \text{Predict}  &     & &                    &    1.00      &                                 &   & 1.50   \\
       \midrule 
        & & & \multicolumn{2}{c}{$N=32$, $\gamma=2$, $\sigma=2$} & &\multicolumn{2}{c}{$M=32$, $\gamma=0.5$, $\sigma=1.1$}\\
   \cmidrule{4-5}  \cmidrule{7-8}
      $\alpha=1.0$ &$M$ & & {$E_s$} & {$CR_s$}  & $N$ & {$E_t$} & {$CR_t$} \\
      \midrule
       & $16$   & &  $3.6308 \times 10^{-2}$  &    *         & $16$    & $1.2054 \times 10^{-1}$  &   *    \\
       & $32$   & &  $1.8470 \times 10^{-2}$  &    0.98      & $32$    & $2.8221 \times 10^{-2}$  & 2.09   \\
       & $64$   & &  $9.5854 \times 10^{-3}$  &    0.95      & $64$    & $6.8974 \times 10^{-3}$  & 2.03    \\
       & $128$  & &  $4.9102 \times 10^{-3}$  &    0.97      & $128$   & $1.7148 \times 10^{-3}$  & 2.01    \\
   \text{Predict}  &     & &                   &    1.00      &                                &   & 2.00   \\
      \bottomrule
    \end{tabular}
\end{table}

Next, we test the possible energy dissipation of the proposed model. It is shown in \cite[Page 498]{Cavalcanti1} that the Euler-Bernoulli viscoelastic model, which coincides with the form of model (\ref{eq1.1}) with $-\Delta$ replaced by $\Delta^2$, admits the energy dissipation with the corresponding energy 
\begin{equation}\label{mh1}
\hat E(t):=\frac{1}{2} \|u'(t)\|^2 + \frac{1}{2} \|\Delta u(t)\|^2
\end{equation}
Motivated by this definition, we define the following energy for problem \eqref{eq1.1} with $f=0$ by 
\begin{equation}\label{energy1}
  \begin{split}
     \widetilde{E}(t) = \frac{1}{2} \|u'(t)\|^2 + \frac{1}{2} \|\nabla u(t)\|^2, \quad t\geq 0,
  \end{split}
\end{equation}
and we numerically evaluate this energy as follows 
\begin{equation*}
  \begin{split}
     \widetilde{E}^n = \frac{1}{2} \|\bar{\delta}_tU^n\|^2 + \frac{1}{2} \|\nabla U^n\|^2, \quad n\geq 1, \quad \widetilde{E}^0 = \frac{1}{2} \|u_1\|^2 + \frac{1}{2} \|\nabla u_0\|^2.
  \end{split}
\end{equation*}
Numerical results are presented in Figure \ref{fig1}, which indicate the dissipative property of the energy for $\alpha=0.5$ or $1$.

\begin{figure}[H]
\centering
\includegraphics[width=3.8in]{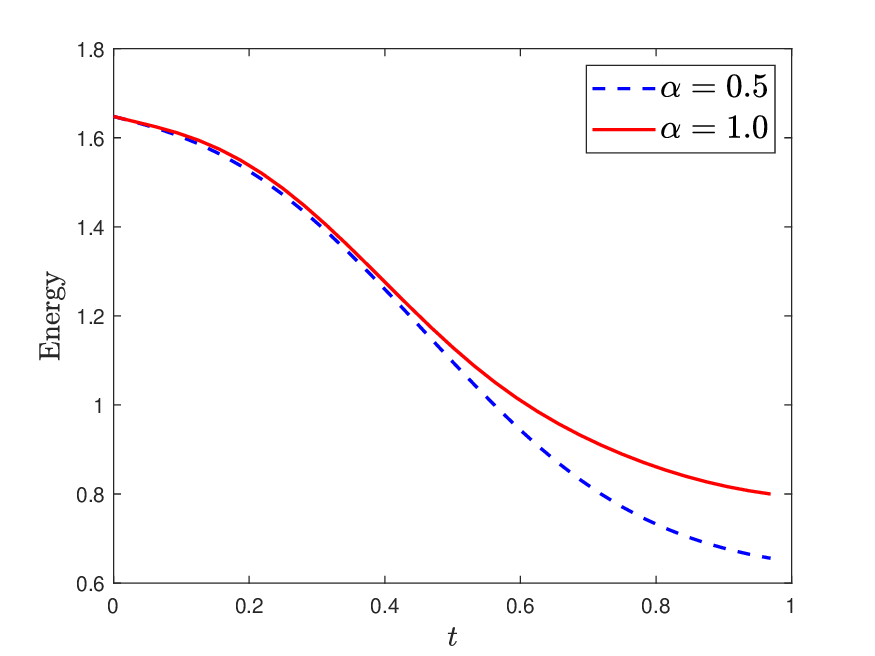}
\caption{Plots of the energy under $f=0$, $\sigma=3$, $\gamma=3\sqrt{3}$, and $N=M=32$.}
\label{fig1}
\end{figure}

\subsection{Two-dimensional case} 
Here, let $\Omega=(0,1)\times (0,1)$, $u_0(x,y)=\sin(\pi x)\sin(\pi y)$,  $u_1(x,y)=\sin(2\pi x)\sin(2\pi y)$, and $f(x,y,t)=0$. Let $G(z)=\sqrt{1+z}$ with $\mu_1=\mu_2=1$ in \eqref{eq1.2} and $h_x=h_y=h=\frac{1}{M}$ for some $M>0$. In two-dimensional case, to illustrate numerically the achieved $H^1$-norm convergence rates in Theorem \ref{theorem2.5}, we denote
\begin{align*}
    {E_t(M,N)}& =h\,\sqrt{\sum\limits_{i=1}^{M-1}\sum\limits_{j=1}^{M-1}\left|W_{i,j}^{N+1}-W_{i,j}^{2(N+1)} \right|^2},  \quad {E_s(M,N)}=h\,\sqrt{\sum\limits_{i=1}^{M-1}\sum\limits_{j=1}^{M-1}\left|W_{i,j}^{N+1}-W_{2i,2j}^{N+1} \right|^2},   \\
    W_{i,j}^n & = \sqrt{\left(\frac{U_{i,j}^n-U_{i-1,j}^n}{h}\right)^2+\left(\frac{U_{i,j}^n-U_{i,j-1}^n}{h}\right)^2}.
\end{align*}

In Table \ref{tab3-1}, we test the $H^1$ errors and convergence orders of the fully discrete Galerkin scheme for the two-dimensional case. We observe that the numerical results are consistent with our theoretical analysis (see Remark \ref{rem2.1}).

\begin{table}
    \center \footnotesize
    \caption{ $H^1$ errors and time-space convergence rates under $T=1/2$.} 
    \label{tab3-1}
    \vskip 2mm
    \begin{tabular}{cccccccccccc}
      \toprule
    & & & \multicolumn{2}{c}{$N=16$, $\gamma=3\sqrt{3}$, $\sigma=3$} & &\multicolumn{2}{c}{$M=64$, $\gamma=0.5$, $\sigma=1.5$}\\
   \cmidrule{4-5}  \cmidrule{7-8}
      $\alpha=0.5$ &$M$ & & {$E_s$} & {$CR_s$}  & $N$ & {$E_t$} & {$CR_t$} \\
      \midrule
       & $64$    & &  $6.9769 \times 10^{-3}$  &    *         & $64$    & $4.2265 \times 10^{-3}$  &    *    \\
       & $128$   & &  $3.3304 \times 10^{-3}$  &    1.07      & $128$   & $1.3363 \times 10^{-3}$  & 1.66   \\
       & $256$   & &  $1.6259 \times 10^{-3}$  &    1.03      & $256$   & $4.3567 \times 10^{-4}$  & 1.62    \\
       & $512$   & &  $8.0321 \times 10^{-4}$  &    1.02      & $512$   & $1.4597 \times 10^{-4}$  & 1.58    \\
   \text{Predict}  &     & &                   &    1.00      &                                &   & 1.50   \\
       \midrule 
        & & & \multicolumn{2}{c}{$N=16$, $\gamma=2$, $\sigma=2$} & &\multicolumn{2}{c}{$M=64$, $\gamma=0.5$, $\sigma=1.1$}\\
   \cmidrule{4-5}  \cmidrule{7-8}
      $\alpha=1.0$ &$M$ & & {$E_s$} & {$CR_s$}  & $N$ & {$E_t$} & {$CR_t$} \\
      \midrule
       & $64$   & &  $8.9036 \times 10^{-3}$  &    *       & $32$   & $9.9870 \times 10^{-3}$  &   *    \\
       & $128$  & &  $4.3094 \times 10^{-3}$  &    1.05    & $64$   & $2.4006 \times 10^{-3}$  & 2.06   \\
       & $256$  & &  $2.1189 \times 10^{-3}$  &    1.02    & $128$  & $5.1903 \times 10^{-4}$  & 2.21    \\
       & $512$  & &  $1.0505 \times 10^{-3}$  &    1.01    & $256$  & $1.2184 \times 10^{-4}$  & 2.09    \\
   \text{Predict}  &    & &                   &    1.00    &                               &   & 2.00   \\
      \bottomrule
    \end{tabular}
\end{table}

\section{Concluding remarks} \label{sec6}

This work performs numerical analysis for a hyperbolic integrodifferential equation. The developed techniques in this work overcome the difficulties caused by the non-positive variable-sign kernel and nonlinear-nonlocal damping, which could also be extended to investigate other related problems such as the Euler-Bernoulli viscoelastic problem in \cite{Cavalcanti1} that coincides with the form of model (\ref{eq1.1}) with $-\Delta$ replaced by $\Delta^2$, or (\ref{eq1.1}) with a different damping coefficient \cite{Xu3}
    \begin{equation*}
        q(t) = G\left( \int_0^t   \int_{\Omega}\mu_1| u(x,s)|^2+\mu_2|\nabla u(x,s)|^2dxds  \right), \;\; \mu_1, \mu_2 \geq 0, \;\; \mu_1^2+ \mu_2^2 \neq 0.
    \end{equation*}
    In particular, it is shown in \cite[Theorem 2.1]{Cavalcanti1} that if the kernel function $\beta(t)$ satisfies the assumption ($\mathbf{A1}$) and
   \begin{align}
       &\qquad\qquad \beta(0)>0, \quad 1-\int_{0}^{\infty}\beta(s)ds = \ell >0, \label{ass1} \\
       & -c_1\beta(t) \leq \beta'(t) \leq -c_2 \beta(t), \quad 0\leq \beta''(t) \leq c_3 \beta(t) \quad \forall t\geq 0, \nonumber 
    \end{align}
    where $c_1,c_2,c_3$ are positive constants, the energy (\ref{mh1}) decays exponentially in time. However, since the kernel $\beta(t)$ in this work has a variable sign and initial singularity, it does not satisfy the conditions in \eqref{ass1} such that it is difficult to follow the derivations in \cite{Cavalcanti1} to derive the energy decay. Nevertheless, the numerical experiment suggests the energy decay as shown in Figure \ref{fig1}, which motivates us to perform a further investigation for this issue.

Finally, in the current work we only prove the error estimate in the $H^1$ norm. Due to the existence of the gradient term in $q(t)$, which may limit the improvement of the convergence order in deriving the $L^2$ error, it is not straightforward to obtain the optimal $L^2$ error estimate in the current circumstance. We will investigate this interesting question in the near future.



\end{document}